\theoremstyle{plain}
\newtheorem{theorem}{Theorem}[section]
\newtheorem*{theorem*}{Theorem}
\newtheorem{lemma}[theorem]{Lemma}
\newtheorem{proposition}[theorem]{Proposition}
\theoremstyle{definition}
\newtheorem{definition}[theorem]{Definition}
\newtheorem{example}[theorem]{Example}
\newtheorem{remark}[theorem]{Remark}
\DeclareMathOperator{\Ext}{Ext}
\DeclareMathOperator{\Hom}{Hom}
\DeclareMathOperator{\iso}{\cong}
\DeclareMathOperator{\rk}{rk}
\DeclareMathOperator{\Pic}{Pic}
\DeclareMathOperator{\num}{Num}
\DeclareMathOperator{\im}{im}
\DeclareMathOperator{\Aut}{Aut}
\DeclareMathOperator{\Coh}{Coh}
\DeclareMathOperator{\rad}{rad}
\DeclareMathOperator{\C}{\mathbb{C}}
\DeclareMathOperator{\Z}{\mathbb{Z}}
\DeclareMathOperator{\Q}{\mathbb{Q}}
\DeclareMathOperator{\vp}{\varphi}
\DeclareMathOperator{\SpCoh}{Sp-Coh}
\DeclareMathOperator{\SL}{SL}
\begin{document}

\title{Derived Autoequivalences of Bielliptic Surfaces}

\author{Rory Potter}

\address{School of Mathematics and Statistics, University of
  Sheffield, Hicks Building, Hounsfield road, Sheffield, South
  Yorkshire, S3 7RH}
\email{rdpotter1@sheffield.ac.uk}

\date{\today}

\begin{abstract}
  We describe the group of exact autoequivalences of the bounded
  derived category of coherent sheaves on a bielliptic surface. We
  achieve this by studying its action on the numerical Grothendieck
  group of the surface.
\end{abstract}

\thanks{I am supported by a Graduate Teaching Assistantship by the
  University of Sheffield.}
\maketitle

\section{Introduction}
\label{sec:intro}

Let $X$ be a smooth projective variety over the complex numbers. We
can construct the bounded derived category of coherent sheaves on $X$
denoted by $D(X) = D^b\Coh(X)$. It is natural to study the symmetries
of $D(X)$ which preserve the intrinsic structure: the group
$\Aut D(X)$ of exact $\C$-linear autoequivalences of $D(X)$ considered
up to isomorphism as functors. We think of these autoequivalences as
``higher'' symmetries of the variety. Several autoequivalences of
$D(X)$ arise naturally forming the subgroup
\[
\Aut_{st} D(X) = \left( \Aut X \ltimes \Pic X \right) \times \mathbb{Z}
\]
of \emph{standard autoequivalences} of $\Aut D(X)$. This subgroup is
generated by pulling back along automorphisms of $X$, tensoring by
line bundles and by powers of the shift functor.

It is natural to ask if there are any others? When the
(anti-)canonical bundle of $X$ is ample, Bondal and Orlov
\cite{MR1818984} showed that $\Aut D(X) = \Aut_{st} D(X)$, i.e. there
are no extra autoequivalences of $D(X)$. The first example of a
non-standard autoequivalence was observed by Mukai \cite{MR607081} for
principally polarized abelian varieties. Many people have tried to
understand non-standard autoequivalences of the derived category but
the full group $\Aut D(X)$ is only understood in a small number of
cases. Orlov \cite{MR1921811} computed the full group for Abelian
varieties. Together with Bondal and Orlov's result, this classifies
the group of autoequivalences of the derived category of smooth
projective curves. Broomhead and Ploog \cite{MR3162236} computed the
group for many rational surfaces (including most toric
surfaces). Bayer and Bridgeland \cite{2013arXiv1310.8266B} described
the group for K$3$ surfaces of Picard rank $1$. Uehara
\cite{2015arXiv150106657U} conjectured a description of the group for
smooth projective elliptic surfaces of non-zero Kodaira dimension and
proved the conjecture when each reducible fibre is a cycle of
$(-2)$-curves. Furthermore, he describes the group for elliptic ruled
surfaces \cite{2015arXiv151106031U}. Ishii and Uehara \cite{MR2198807}
computed the group for smooth projective surfaces (not necessarily
minimal) of general type whose canonical model has at worst $A_n$
singularities. No other examples are completely understood at this
time. In this paper we describe the group $\Aut D(S)$ when $S$ is a
bielliptic surface.  \newline

Let $S$ be a bielliptic surface, $\widetilde{S}$ the abelian surface
which is the canonical cover of $S$, and $N(S)$ the numerical
Grothendieck group of $S$. Denote by $O_{\Delta}(N(S))$ the subgroup
of isometries of $N(S)$ which preserve
\[
  \Delta = \left\{ [E] \in N(S) \middle| [E] = \pi_!([\widetilde{E}])
    \text{ for some } [\widetilde{E}] \in N(\widetilde{S}) \right\}
\]
where $\pi_! \colon N(\widetilde{S}) \to N(S)$ is the pushforward on
$K$-theory. The main result is the following.

\begin{theorem}\label{thm:main}
  There is an exact sequence
  \begin{equation*}
    \begin{tikzcd}[column sep=small]
      1 \arrow{r} & (\Aut S \ltimes \Pic^0 S) \times \mathbb{Z}
      \arrow{r} & \Aut D(S) \arrow{r}{\rho} & O_{\Delta}(N(S))
    \end{tikzcd}
  \end{equation*}
  where $\mathbb{Z}$ is generated by the second shift $[2]$. The map
  $\rho$ is induced by the natural action of $\Aut D(S)$ on $N(S)$
  given by $\rho(\Phi)[E] = [\Phi(E)]$. Furthermore, the image of
  $\rho$ is a subgroup of $O_{\Delta}(N(S))$ of index $4$ if $S$ is of
  type $2$ or $4$ and index $2$ otherwise (see Table
  1).
\end{theorem}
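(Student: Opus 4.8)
The plan is to establish the three features of the sequence in turn: well-definedness of $\rho$ together with injectivity of the left-hand map, exactness in the middle, and the index of the image.

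First I would verify that $\rho$ is well defined, i.e. that $\rho(\Phi)\in O_{\Delta}(N(S))$ for every $\Phi$. Since any exact equivalence preserves the Euler pairing $\chi([E],[F])=\sum_i(-1)^i\dim\Hom_{D(S)}(E,F[i])$, the induced map $\rho(\Phi)$ is automatically an isometry of $N(S)$. To see that it preserves $\Delta$, I would use that every autoequivalence commutes with the Serre functor $S_S=(-)\otimes\omega_S[2]$; because the canonical cover $\pi\colon\widetilde S\to S$ is determined intrinsically by $\omega_S$ and $\Delta=\im\pi_!$, the compatibility of $\Phi$ with the description $D(S)\cong D_G(\widetilde S)$ forces $\rho(\Phi)$ to preserve the $\pi_!$-classes (making this precise via the equivariant picture handles the cyclic and non-cyclic groups $G$ uniformly). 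For the left-hand map, tensoring by $L\in\Pic^0 S$ acts on $N(S)$ by multiplication by $[L]=1+c_1(L)+\ldots$ with $c_1(L)$ numerically trivial, hence trivially; every automorphism of a bielliptic surface acts trivially on $N(S)$ (the two elliptic fibrations cannot be interchanged, so $\num S$ is fixed); and the even shift acts by $(-1)^2=1$. Thus the named subgroup lies in $\ker\rho$, and its injection into $\Aut D(S)$ is the standard faithfulness of these functors.

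The technical heart is exactness in the middle: an autoequivalence $\Phi$ with $\rho(\Phi)=\mathrm{id}$ must lie in $(\Aut S\ltimes\Pic^0 S)\times\mathbb{Z}$. By Orlov's representability theorem $\Phi$ is a Fourier--Mukai transform. Since $\Phi$ fixes $[k(x)]$ and commutes with $S_S$, I would first show that $\Phi$ sends each skyscraper $k(x)$ to a shift of a skyscraper. The decisive input is that a bielliptic surface carries no spherical objects (its canonical bundle is numerically trivial, obstructing the required $\Ext$-vanishing), so the point-like objects --- those $P$ with $P\otimes\omega_S\cong P$, $\Hom^{<0}(P,P)=0$ and $\Hom(P,P)=\C$ --- are precisely the shifts of the $k(x)$. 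After composing with a global shift we may assume $\Phi(k(x))=k(\sigma(x))$ for a bijection $\sigma$; a standard argument upgrades $\sigma$ to an automorphism of $S$, and composing with its pullback reduces to the case $\Phi(k(x))\cong k(x)$ for all $x$. A Fourier--Mukai transform fixing every closed point is tensoring by a line bundle $L$, and $\rho(\Phi)=\mathrm{id}$ forces $c_1(L)$ to be numerically trivial, so $L\in\Pic^0 S$. I expect this step --- in particular pinning down the point-like objects and the passage from the bijection $\sigma$ to an honest automorphism --- to be the main obstacle.

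Finally I would compute the index of $\im\rho$. I would first make $N(S)$ and the Euler form completely explicit (rank $4$, with $\num S$ of rank $2$), describe $\Delta$ as an explicit sublattice, and identify $O_{\Delta}(N(S))$ as a concrete arithmetic group. To produce elements of the image I would exhibit autoequivalences realizing a generating set --- tensoring by line bundles with nontrivial first Chern class, and Fourier--Mukai transforms descended from Poincar\'e-type kernels on the abelian cover $\widetilde S$ --- and compute their matrices. To bound the image from above I would define a sign homomorphism $O_{\Delta}(N(S))\to(\mathbb{Z}/2)^{m}$ with $m\in\{1,2\}$, detecting the orientation of the action on a positive-definite plane together with the parity coming from the $2$-torsion of the covering group $G$; using the equivariant lift to $D(\widetilde S)$ and Orlov's description of $\Aut D(\widetilde S)$, I would show every autoequivalence preserves this datum, so $\im\rho$ lands in the kernel, and then that the generators above surject onto that kernel. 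Reading off the order of the target from the Bagnera--de Franchis type --- the extra factor of $2$ appearing exactly for the non-cyclic groups $G$ of types $2$ and $4$, cf.\ Table 1 --- yields index $4$ in those cases and index $2$ otherwise.
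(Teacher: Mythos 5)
Your treatment of well-definedness ($\rho$ lands in $O_{\Delta}(N(S))$ via the equivariant lift to the canonical cover) and of the standard subgroup matches the paper, but each of your two main steps has a genuine gap.

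First, the kernel. Your decisive claim --- that on a bielliptic surface the point-like objects ($P \otimes \omega_S \iso P$, $\Hom^{<0}(P,P)=0$, $\Hom(P,P)=\C$) are precisely the shifts of skyscrapers --- is false, and the absence of spherical objects is not the relevant input. Any $H$-stable \emph{special} sheaf $E$ of isotropic class is point-like: $E \otimes \omega_S \iso E$, $\Hom(E,E)=\C$, $\Ext^2(E,E) \iso \Hom(E,E\otimes\omega_S)^* \iso \C$, and then $\Ext^1(E,E) \iso \C^2$; for instance $\pi_*L$ for a generic line bundle $L$ on the canonical cover $\widetilde S$ is such a sheaf. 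Indeed, the paper's Proposition \ref{prop:moduli} rests on the existence of two-dimensional fine moduli spaces of exactly these objects, and they are the source of all the non-standard autoequivalences. Even if you add the numerical constraint you do have, $[\Phi(k(x))]=(0,0,1)$, the serious point is to show that $\Phi(k(x))$ is a shift of a \emph{sheaf}: a complex of class $(0,0,1)$ can a priori have one-dimensional support, with curve-supported cohomology sheaves cancelling numerically. The paper obtains this from the cited structural result that every Fourier--Mukai kernel of an autoequivalence of a bielliptic surface is a shift of a sheaf \cite[Proposition 5.1]{MR3148638}; your argument has no substitute for that input, and this is exactly where the work lies.

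Second, the index. Your proposed upper-bound invariant --- orientation on a positive-definite plane together with ``the parity coming from the $2$-torsion of the covering group $G$'', with the extra factor of $2$ occurring ``exactly for the non-cyclic $G$'' --- gives the wrong answer on the paper's own table: type $4$ has $G \iso \Z/3 \oplus \Z/3$, with no $2$-torsion, yet index $4$, while type $6$ has $G \iso \Z/4 \oplus \Z/2$, non-cyclic and with $2$-torsion, yet index $2$. The correct criterion is lattice-theoretic: after composing with realized autoequivalences and a line-bundle twist, any $\vp \in O_{\Delta}(N(S))$ is reduced to an isometry acting as the identity on $H^0 \oplus H^4$, so the index equals the order of the subgroup of $O(\num(S)) \iso \Z/2 \times \Z/2$ preserving $\Delta$; the involution $-\mathrm{id}$ always preserves $\Delta$, whereas the swap $\sigma$ of the generators $\frac{1}{n}A$ and $\frac{1}{k}B$ preserves $\Delta$ precisely in types $2$ and $4$ (where $n=k$), and fails otherwise --- e.g.\ in type $6$ one has $\sigma(B)=\frac{1}{2}A \notin \Delta$. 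Moreover, your lower bound is underpowered: the reduction above needs, for \emph{every} isotropic $v \in \Delta$ admitting $v'$ with $\chi(v,v')=1$, an autoequivalence $\Phi$ with $[\Phi(\mathcal{O}_s)]=v$. Line-bundle twists and kernels descended from $\widetilde S$ do not visibly produce these (descent requires equivariance, and note that the paper's Theorem \ref{thm:generators} on explicit generators is only established in the split case); the paper instead constructs $\Phi$ from a fine moduli space of stable special sheaves of class $v$, identified with $S$ by the uniqueness of Fourier--Mukai partners (Proposition \ref{prop:beilliptic-FM-partners}).
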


Moreover, we describe the generators of $\Aut D(S)$ in some cases.

\begin{theorem}\label{thm:generators}
  Suppose $S$ is a split bielliptic surface (see Definition
  \ref{def:split}). Then the group $\Aut D(S)$ is generated by
  standard autoequivalences and relative Fourier-Mukai transforms
  along the two elliptic fibrations.
\end{theorem}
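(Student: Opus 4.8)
The plan is to bootstrap from the short exact sequence of Theorem~\ref{thm:main}. Writing $K = (\Aut S \ltimes \Pic^0 S) \times \mathbb{Z}$ for the kernel of $\rho$, every element of $K$ is already a standard autoequivalence, so $\Aut D(S)$ is generated by $K$ together with any set of autoequivalences whose images generate $\im \rho$. The theorem therefore reduces to a statement about the image alone: it suffices to exhibit standard autoequivalences and relative Fourier--Mukai transforms along the two fibrations whose images under $\rho$ generate the index-$2$ (or index-$4$) subgroup $\im \rho \subseteq O_{\Delta}(N(S))$ identified in Theorem~\ref{thm:main}.

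First I would make $N(S)$ and its Euler form explicit. As a bielliptic surface has Picard number $2$, the group $N(S)$ has rank $4$; via the Mukai vector I would present it as $\mathbb{Z} \oplus \NS(S) \oplus \mathbb{Z}$ with the Mukai pairing, which has signature $(2,2)$. On this lattice the standard generators act in a well-understood way: tensoring by a line bundle $L$ acts by the unipotent isometry $v \mapsto v \cdot \exp(c_1(L))$, which has infinite order once $c_1(L) \neq 0$, while the shift $[1]$ acts as $-\mathrm{id}$. Computing the two remaining generators, the relative Fourier--Mukai transforms, is where the split hypothesis (Definition~\ref{def:split}) enters: a genuine relative transform along a fibration $p \colon S \to C$ requires a relative Poincar\'e sheaf, and the split condition supplies it for both fibrations. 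Each such transform should act on $N(S)$ as an involution mixing the fibre class $f_i$ with the rank/point directions, analogous to the $\begin{pmatrix} 0 & -1 \\ 1 & 0 \end{pmatrix}$ action of the elliptic-curve transform.

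I would then identify $O_{\Delta}(N(S))$ as an arithmetic subgroup of $O(2,2)$ and use the exceptional isogeny $SO^+(2,2) \sim \SL_2 \times \SL_2$ to coordinatise it by the two hyperbolic planes coming from the two fibrations. Under this identification I expect the line-bundle twists to furnish the unipotent ($T$-type) generators and the relative transforms to furnish the ($S$-type) generators of the two $\SL_2(\mathbb{Z})$ factors; since $S$ and $T$ generate $\SL_2(\mathbb{Z})$, the standard autoequivalences and the two relative transforms would then generate $\im \rho$ up to the scalar $\pm 1$ supplied by $[1]$ and the reflections recorded by the $\Delta$-constraint.

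The main obstacle is this final generation step, together with the index bookkeeping. It is one thing to realise $S$- and $T$-type elements in each factor and another to verify they generate exactly $\im \rho$---the prescribed index-$2$ or index-$4$ subgroup of $O_{\Delta}(N(S))$ in Table~1---rather than a proper subgroup, and to match the orientation/spinor-norm classes that distinguish the two cases. This is a concrete but delicate computation inside the arithmetic group $O_{\Delta}(N(S))$. The split hypothesis is exactly what makes the relative transforms available as honest autoequivalences and lets the two hyperbolic-plane factors be separated cleanly; in the non-split case one would be forced to replace them by twisted or moduli-theoretic Fourier--Mukai kernels, which is why the theorem is confined to split surfaces.
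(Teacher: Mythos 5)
Your opening reduction is sound: since Theorem \ref{thm:main} identifies $\ker\rho = (\Aut S \ltimes \Pic^0 S)\times\mathbb{Z}[2]$, which consists of standard autoequivalences, it does suffice to show that the images under $\rho$ of standard autoequivalences and relative Fourier--Mukai transforms generate $\im\rho$. The genuine gap is that this generation statement \emph{is} the theorem, and your proposal leaves it as an expectation rather than a proof. The $SO^{+}(2,2)\sim\SL_2\times\SL_2$ coordinatisation, the claim that line-bundle twists and relative transforms supply $T$-type and $S$-type generators of the two factors, and the matching of the index-$2$ subgroup and the $\Delta$-constraint are all flagged by you as ``the main obstacle'' and never carried out. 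Worse, this route needs an explicit description of $\im\rho$ as an arithmetic group, but Theorem \ref{thm:main} only pins $\im\rho$ down as an index-$2$ or index-$4$ subgroup of $O_{\Delta}(N(S))$, and the autoequivalences used there to realise the image come from moduli-space kernels (Proposition \ref{prop:moduli}), not from relative Fourier--Mukai transforms; nothing in your argument yet connects those moduli-theoretic isometries to words in the proposed generators. So the proposal is a plausible program, not a proof.

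Two further inaccuracies are worth correcting. First, the split hypothesis does not enter through the \emph{existence} of relative Fourier--Mukai transforms: Proposition \ref{prop:rel-fm-bielliptic} provides them along both fibrations for every bielliptic surface. What splitness actually buys is that $p_A$ admits a section, i.e.\ $\lambda_{p_A}=1$, so Theorem \ref{thm:relative-fm} (which requires $\lambda \mid d$) applies to the matrix $\begin{pmatrix} 0 & 1 \\ -1 & 0 \end{pmatrix}$ and yields the transform $\hat{\Psi}$ exchanging rank and fibre-degree; it also gives $\num(S) = \langle \frac{1}{n}A, B\rangle$ and $\Delta = n\Z\oplus\pi_*(\num(\widetilde{S}))\oplus\Z$, which drive the divisibility bookkeeping. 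Second, the relative transforms are not involutions on $N(S)$; they act through $\SL_2(\Z)$-matrices on the (rank, fibre-degree) pair. The paper's own proof avoids computing $O_{\Delta}(N(S))$ or $\im\rho$ abstractly altogether: given $\Phi$, it sets $v = \rho(\Phi)(0,0,1)$ and runs the Euclidean algorithm on the coefficients of $v$, using $\rho(\Psi_B^{-1})$ and line-bundle twists, to build a word $\Psi$ in the proposed generators with $\rho(\Psi)(0,0,1)=v$; then $\rho(\Psi^{-1}\circ\Phi)$ fixes the point class, and the kernel argument from Theorem \ref{thm:main} (Orlov's theorem plus the fact that kernels on bielliptic surfaces are shifts of sheaves) forces $\Psi^{-1}\circ\Phi$ to be standard. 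Your $S$/$T$-generator intuition is the same germ as that Euclidean algorithm, but an orbit-plus-stabiliser argument of this kind is what your proposal is missing.
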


The article is structured as follows. In section
\ref{sec:preliminaries} we review preliminary material on bielliptic
surfaces, the numerical Grothendieck group, canonical covers and
relative Fourier-Mukai transforms. In section \ref{sec:proof} we prove
Theorem \ref{thm:main} by describing a collection of autoequivalences
arising from moduli spaces of stable, special sheaves whose Chern
character lies in $\Delta$. In section
\ref{sec:relat-four-mukai-bielliptic} we prove Theorem
\ref{thm:generators}.

\subsection*{Acknowledgments}
\label{sec:aclnowledgements}

I want to thank my supervisor Tom Bridgeland for suggesting this
problem and for his guidance, patience and support. I also want to
thank Evgeny Shinder for many helpful conversations and for alerting
me to an error in an earlier version of this paper. I also want to
thank Paul Johnson for a helpful conversation concerning Section
\ref{sec:relat-four-mukai-bielliptic}.

\section{Preliminaries}
\label{sec:preliminaries}

All varieties will be over the complex numbers.

\subsection{Bielliptic surfaces}
\label{sec:bielliptic-surfaces}

\begin{definition}
  A \emph{bielliptic} (or \emph{hyperelliptic}) surface $S$ is a
  minimal projective surface of Kodaira dimension zero with $q=1$ and
  $p_g=0$.
\end{definition}

Bielliptic surfaces are constructed by taking the quotient of the
product of two elliptic curves $A \times B$ by a finite subgroup $G$
of $A$ acting on $A$ by translations and on $B$ by automorphisms, not
all translations. These surfaces are classified by Bagnera and De
Franchis into seven families \cite[\S V.5]{MR2030225} determined by
the subgroup $G$ and the lattice $\Gamma$ such that $B = \C / \Gamma$
(see Table 1).

\begin{table}[!h]
\centering
  \begin{tabular}{ c | c | c | p{5cm}  }
    Type & $\Gamma$ & G & Action of $G$ on $B$ \\ \hline
    1 & Arbitrary & $\Z/2$ & $b \mapsto -b$\\ 
    2 & Arbitrary & $\Z/2 \oplus \Z/2 $ & $b \mapsto -b$, \newline $b \mapsto b + \beta$, where $2\beta = 0$\\ 
    3 & $\Z \oplus \Z \omega$ & $\Z/3 $ & $b \mapsto \omega b$ \\
    4 & $\Z \oplus \Z \omega$ & $\Z/3 \oplus \Z/3$  & $b \mapsto \omega b$, \newline $b \mapsto b + \beta$, where $\omega \beta = \beta$ \\
    5 & $\Z \oplus \Z i$ & $\Z/4$ & $b \mapsto ib$ \\ 
    6 & $\Z \oplus \Z i$ & $\Z/4 \oplus \Z /2$ & $b \mapsto ib$,\newline $b \mapsto b + \beta$, where $i\beta = \beta$ \\
    7 & $\Z \oplus \Z \omega$ & $\Z/6$ & $b \mapsto - \omega b$ \\ \hline
  \end{tabular}
\label{table:types-of-bielliptic-surfaces}
\caption{($\omega^3 = 1$ and $i^4 = 1$ are complex roots of unity.)}
\end{table}

\begin{definition}\label{def:split}
  We call a bielliptic surface \emph{split} if it is of type $1,3,5$,
  or $7$ and \emph{non-split} otherwise.
\end{definition}

\begin{remark}
  Associated to a bielliptic surface $S$ are two elliptic fibrations:
  \begin{align*}
    p_A \colon& S \to A/G\\
    p_B \colon& S \to B/G
  \end{align*}
  with $A/G$ an elliptic curve and $B/G \iso \mathbb{P}^1$.

  Since the projection $A \to A/G$ is \'{e}tale, all the fibres of
  $p_A$ are smooth.  The fibre of $p_B$ over a point $P \in B/G$ is a
  multiple of a smooth elliptic curve. The multiplicity of the fibre
  of $p_B$ at $P$ is the same as the multiplicity of the projection
  $B \to B /G \iso \mathbb{P}^1$. As all smooth fibres of $p_A$
  (respectively $p_B$) are isomorphic to $B$ (respectively $A$) we
  will denote the class of the smooth fibre of $p_A$ and $p_B$ in
  $H^2(S,\Q)$ by $B$ and $A$ respectively.
\end{remark}

The derived category of a bielliptic surface $S$ is a strong invariant
of the surface due to the following result of Bridgeland and Maciocia.

\begin{proposition}[{\cite[Proposition
    6.2]{MR1827500}}]\label{prop:beilliptic-FM-partners}
Let $S$ be a bielliptic surface and $S'$ a smooth projective surface such that 
\[
D(S') \iso D(S).
\]
Then $S$ is isomorphic to $S'$.
\end{proposition}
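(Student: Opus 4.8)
The plan is to first pin down $S'$ as another bielliptic surface using derived invariants, and then to reduce the isomorphism question to the canonical covers, which are abelian surfaces.

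\textbf{Step 1: identifying $S'$.} First I would record that a $\mathbb{C}$-linear derived equivalence between smooth projective varieties preserves the dimension, the Kodaira dimension (the canonical ring being a derived invariant), and, through Hochschild homology together with the Hochschild--Kostant--Rosenberg decomposition, all the Hodge numbers $h^{p,q}$. Applied here, $S'$ is a smooth projective surface with $\kappa(S')=0$, $q(S')=h^{0,1}(S')=1$ and $p_g(S')=h^{0,2}(S')=0$. Since a blow-up strictly increases $h^{1,1}$ while $h^{1,1}(S)=2$ is preserved, $S'$ must be minimal. By the Enriques--Kodaira classification, the only minimal surface with $\kappa=0$, $q=1$ and $p_g=0$ is a bielliptic surface, so $S'$ is bielliptic.

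\textbf{Step 2: passing to the canonical covers.} Because the Serre functor $- \otimes \omega_S[2]$ is intrinsic to the triangulated category $D(S)$, the order $n$ of $\omega_S$ in $\Pic S$, namely the least positive $k$ with $(- \otimes \omega_S[2])^{\circ k} \iso [2k]$, is a derived invariant; hence $\omega_{S'}$ has the same order. I would then recall that the canonical cover $\pi \colon \widetilde{S} \to S$ is the abelian surface obtained from the sheaf of algebras $\bigoplus_{i=0}^{n-1} \omega_S^{-i}$, a construction recovered from the category together with its Serre functor, and similarly for $S'$. The given equivalence therefore lifts to a derived equivalence $D(\widetilde{S}) \iso D(\widetilde{S'})$ intertwining the two $\mathbb{Z}/n$-actions by deck transformations, which act on each abelian surface freely by translations.

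\textbf{Step 3: descent on the abelian surfaces.} Finally I would invoke the classification of Fourier--Mukai partners of abelian surfaces to constrain $\widetilde{S'}$, which is a partner of $\widetilde{S} = A \times B$, and then use equivariance: the requirement that the equivalence commute with the free $\mathbb{Z}/n$ translation actions should force $\widetilde{S'} \iso \widetilde{S}$ together with an identification of the two actions, after which the isomorphism descends along $\pi$ to give $S \iso S'$.

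The hard part will be Step 3: tracking the equivariant structure through the Fourier--Mukai correspondence on the abelian surface and checking that the translation action, and not merely the abstract isomorphism type of $\widetilde{S'}$, is preserved. It is precisely this equivariance that must rule out the nontrivial partners that $A \times B$ would otherwise admit, and it is where the specific geometry of the product $A \times B$ and of the group $G$ in Table 1 enters.
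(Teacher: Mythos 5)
Your Step 1 is essentially correct: dimension, Kodaira dimension and (for surfaces) all the individual Hodge numbers are derived invariants, and the $h^{1,1}$ count does rule out blow-ups, so $S'$ is bielliptic. The lifting statement in Step 2 is also legitimate (equivalences between varieties with torsion canonical bundle lift to equivariant equivalences of canonical covers, as the paper itself recalls in Proposition \ref{prop:auto-delta}). But Step 2 contains a factual error: the deck group of $\pi \colon \widetilde{S} \to S$ does \emph{not} act on the abelian surface $\widetilde{S}$ by translations --- if it did, the quotient would again be an abelian surface rather than a bielliptic one. For split $S = (A \times B)/G$ the generator acts by $(a,b) \mapsto (a+\tau, \zeta b)$, a translation on the $A$-factor but a nontrivial group automorphism (up to translation) on the $B$-factor; and for non-split $S$ the canonical cover is $(A \times B)/H$, not $A \times B$. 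This is not cosmetic, because your Step 3 strategy is explicitly built on exploiting ``free translation actions.''

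The decisive problem is Step 3 itself: it is a plan, not an argument, and it is precisely where the entire content of the proposition lies --- as you yourself acknowledge. An abelian surface such as $A \times B$ has nontrivial Fourier--Mukai partners and a very large autoequivalence group, and an equivariant derived equivalence does not in any direct way yield an equivariant \emph{isomorphism}; you would also need to exclude the case that $\widetilde{S'} \iso \widetilde{S}$ as a surface but carries a non-conjugate $\Z/n$-action, which would produce a non-isomorphic quotient. None of this is supplied. For comparison: the paper under review does not prove this statement at all but imports it from Bridgeland--Maciocia (Proposition 6.2 of ``Complex surfaces with equivalent derived categories''), and their argument runs along a different line --- rather than descending an equivalence from the cover, they show a Fourier--Mukai partner of $S$ is realized as a two-dimensional fine moduli space of stable sheaves on $S$ (tied to the elliptic fibrations, in the spirit of relative Jacobians), and then identify all such moduli spaces with $S$ itself using the explicit quotient description $S = (A \times B)/G$. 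So your outline is a genuinely different route, but as written it has a gap exactly at the step that makes the theorem true, plus a structural misstatement about the covering group that would need to be repaired before the descent strategy could even be attempted.
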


\subsection{Numerical Grothendieck Group}
\label{sec:numer-groth-group}

The \emph{Grothendieck group} \linebreak $K(X)$ of a smooth projective
variety $X$ is the free group generated by isomorphism classes of objects
in $D(X)$ modulo an equivalence relation given by distinguished
triangles \cite[\S 5]{MR2244106}. There is a natural bilinear form on
this group, the Euler form, defined by
\[
\chi([E],[F]) = \sum_{i \in \mathbb{Z}} (-1)^i \dim_{\mathbb{C}}\Hom_{D(X)}^i(E,F)
\]
where $\Hom^i_{D(X)}(E,F) = \Hom_{D(X)}(E,F[i])$. This bilinear form
is well defined as the Euler form is additive on distinguished
triangles. We can consider the radical of the Euler form
\[
  \rad \chi = \left\{ v \in K(X) \middle| \chi(v,w) =0
    \text{ and } \chi(w,v)= 0 \text{ for all } w \in K(X) \right\}
\]
and form the quotient $N(X) = K(X)/\rad \chi$, which we call the
\linebreak \emph{numerical Grothendieck group} of $X$. The Euler form
descends to a non-degenerate bilinear form on $N(X)$. Recall that
$\num(S)$ is the group of divisors on $S$ modulo numerical equivalence
$\equiv$.

\begin{proposition}\label{prop:num_groth_bielliptic}
  Let $S$ be a bielliptic surface. The Chern character
\[
ch \colon K(S) \to H^{2*}(S, \Q)
\]
identifies $N(S)$ with the group
\[
  H^0(S,\Z) \oplus \num(S) \oplus H^4(S,\Z) \iso \Z \oplus
  \num(S) \oplus \Z.
\]
Under this identification, for $ch(E) = (r,D,s)$ and
$ch(F) = (r',D',s')$ the Euler form becomes
$\chi(E,F) = rs' + r's - D \cdot D'$.
\end{proposition}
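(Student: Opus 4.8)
The plan is to deduce everything from the Hirzebruch--Riemann--Roch formula
\[
\chi([E],[F]) = \int_S ch(E)^{\vee}\, ch(F)\, td(S),
\]
where $ch(E)^{\vee}$ denotes the Chern character with its degree-$2$ part negated. The first task is to compute the Todd class. For a bielliptic surface the canonical bundle is torsion, so $c_1(S) = -K_S$ is numerically trivial and $td_1(S) = \tfrac12 c_1(S) \equiv 0$; since $q = 1$ and $p_g = 0$ we have $\chi(\mathcal{O}_S) = 0$, so Noether's formula forces $c_2(S) = 0$ and $td_2(S) = \tfrac{1}{12}(c_1(S)^2 + c_2(S)) = \chi(\mathcal{O}_S) = 0$. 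Hence $td(S) \equiv 1$ numerically, and the integral above depends only on the numerical classes of the Chern characters.

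Writing $ch(E) = (r,D,s)$ and $ch(F) = (r',D',s')$ with the middle entries taken in $\num(S)$, I would then expand $ch(E)^{\vee}\, ch(F) = (r,-D,s)\,(r',D',s')$ and extract the degree-$4$ component, obtaining $\chi([E],[F]) = rs' + r's - D\cdot D'$ after integration. This is the claimed formula, and it shows in particular that the pairing factors through the triple $(r, \bar D, s) \in \Z \oplus \num(S) \oplus \Z$, where $\bar D$ is the class of $c_1$ modulo numerical equivalence.

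To identify $N(S)$ with this group I would show that $\rad \chi$ is exactly the kernel of the map $K(S) \to \Z \oplus \num(S) \oplus H^4(S,\Q)$ sending $[E]$ to $(r,\bar D, s)$. Pairing a class $v$ with the test objects $\mathcal{O}_S$, a skyscraper $\mathcal{O}_p$, and the line bundles $\mathcal{O}_S(D')$ shows, via the formula above, that $\chi(v,-) \equiv 0$ forces $r = s = 0$ and $D \equiv 0$; conversely such classes clearly lie in the radical. Thus $N(S) = K(S)/\rad \chi$ is isomorphic to the image of this map. Surjectivity onto $\Z \oplus \num(S) \oplus \Z$ is then immediate: $r$ is realised by $\mathcal{O}_S^{\oplus r}$, every class in $\num(S)$ is the $c_1$ of a line bundle, and the point class $ch(\mathcal{O}_p) = (0,0,1)$ generates the last factor, so arbitrary triples with integral $s$ are obtained by combining $[\mathcal{O}_S(D)] - [\mathcal{O}_S]$ with copies of $[\mathcal{O}_p]$.

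The one genuinely non-formal point, which I expect to be the main obstacle, is to verify that the degree-$4$ component $s$ is always integral, so that the image really lands in $\Z$ (and not in $\tfrac12\Z$) in the last slot. Since $ch_2 = \tfrac12 c_1^2 - c_2$, this amounts to showing that the intersection form on $\num(S)$ is even. This follows from the integrality of $\chi(\mathcal{O}_S(D)) = \chi(\mathcal{O}_S) + \tfrac12\, D\cdot(D - K_S)$ together with $K_S \equiv 0$: the relation $D^2 \equiv D\cdot K_S \equiv 0 \pmod 2$ makes $c_1^2$ even, hence $s \in \Z$. It is precisely here that the bielliptic hypothesis (numerically trivial canonical class) is used, and checking this evenness carefully is the heart of the argument.
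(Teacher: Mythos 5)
Your proof is correct, and at the top level it follows the same route as the paper's: both run Hirzebruch--Riemann--Roch with the observation that the Todd class of $S$ is $(1,0,0)$ in rational cohomology, deduce the formula $\chi(E,F)=rs'+r's-D\cdot D'$, and then identify $\rad\chi$ with the kernel of the (numerical) Chern character. The differences lie in how the individual steps are discharged, and they cut both ways. Where the paper simply asserts that the Todd class is $(1,0,0)$, you verify it ($K_S$ torsion, $\chi(\mathcal{O}_S)=0$, Noether's formula); that is a genuine gain in completeness. For the radical, the paper argues abstractly from non-degeneracy of the Mukai pairing on $H^{2*}(S,\Q)$ --- an argument that tacitly requires the image of $ch\otimes\Q$ to span $H^{2*}(S,\Q)$, which does hold for bielliptic surfaces because $H^2(S,\Q)$ is generated by the algebraic classes $A$ and $B$ (Lemma~\ref{lem:DesOfNum}), but this is never said; your pairing against the explicit test objects $\mathcal{O}_S$, $\mathcal{O}_p$ and $\mathcal{O}_S(D')$ sidesteps that issue entirely and is more self-contained. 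Conversely, on integrality of the $H^4$-component the paper is more efficient than you: since the Todd class is $(1,0,0)$, Hirzebruch--Riemann--Roch gives $ch_2(E)=\chi(E)\in\Z$ outright, so the point you single out as ``the heart of the argument'' --- evenness of the intersection form, which you derive correctly from Riemann--Roch for line bundles together with $K_S\equiv 0$ --- is in fact immediate from the very tool you started with.
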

\begin{proof}

  For $v = (v_0,v_2,v_4) \in H^{2*}(S, \Q)$ define
  $v^{\vee} = (v_0, -v_2, v_4) \in H^{2*}(S,\Q)$. Recall that the
  Mukai pairing on $H^{2*}(S,\Q)$ is defined by
\[
\langle v, v'\rangle = \int_X v^{\vee} \cdot v'
\]
where the product in the integral is the cup product of cohomology
classes. As the Todd classes of abelian and bielliptic surfaces are
$(1,0,0)$, by Hirzebruch-Riemann-Roch for $[E],[F] \in K(S)$
\[
  \chi([E],[F]) = \langle ch(E),ch(F) \rangle.
\]
Thus the Euler form for $ch(E) = (r,D,s)$ and
$ch(F) = (r',D',s')$ can be written as
\[
  \chi([E],[F]) = \langle (r,D,s), (r',D',s') \rangle = rs' + r's - D
  \cdot D'.
\]
A class lies in the radical of the Euler form if and only if it lies
in the radical of the Mukai pairing. As the Mukai pairing is
non-degenerate an element of $K(S)$ lies in the radical of the Euler
form if and only if it has zero Chern Character. Hence
$\ker(ch) = \rad \chi$ and $\im(ch) \iso N(S)$.

Using this alternative description of the Euler form, we see that
the class of a numerically trivial divisor $D$, $[\mathcal{O}_S(D)]$
is equivalent to $[\mathcal{O}_S]$. Therefore, the image of the Chern
character intersected with the group $H^2(S,\Q)$ is the group
$\num(S)$. Furthermore, by
Hirzebruch-Riemann-Roch we have $ch_2(E) = \chi(E) \in \Z$ for all
$E$. Thus we have an isomorphism
\[ 
N(S) \iso H^0(X,\Z) \oplus \num(S) \oplus H^4(X,\Z) \iso \Z \oplus
\num(S) \oplus \Z.
\]
\end{proof}

\begin{remark}
  Let $A$ be an Abelian surface. Then a similar argument to
  Proposition \ref{prop:num_groth_bielliptic} shows that the Chern
  character induces an isomorphism
  $N(A) \iso \Z \oplus \num(A) \oplus \Z$.
\end{remark}

\begin{remark}
We will study the group $\Aut D(S)$ by studying its
action on the numerical Grothendieck group given by the homomorphism
\begin{equation*}
  \rho \colon \Aut D(S) \to \Aut(N(S))
\end{equation*}
where $\rho(\Phi)([E]) = [\Phi(E)]$. Autoequivalences of $D(S)$
preserves the $\Hom^i$ groups, thus the Euler form. Hence the image of
$\rho$ is contained in the group of isometries $O(N(S))$ of $N(S)$.
\end{remark}

 \subsection{Canonical covers of Bielliptic surfaces}
\label{sec:can-cover-bielliptic}

\begin{proposition}[{\cite[\S 2]{1998math.....11101B}, \cite[\S 7.3]{MR2244106},\cite[\S 7.2]{MR2511017}}]
  Let $X$ be a smooth projective variety whose canonical bundle
  $\omega_X$ has finite order, i.e. there exists $n$ such that
  $\omega_X^{\otimes n} \iso \mathcal{O}_X$. Then there exists a
  smooth projective variety $\widetilde{X}$ with trivial canonical
  bundle, and an \'{e}tale cover $\pi \colon \widetilde{X} \to X$ of
  degree $n$ such that
\[
  \pi_*(\mathcal{O}_{\widetilde{X}}) \iso \bigoplus_{i=0}^{n-1} \omega_X^{\otimes i}.
\]
Furthermore, $\widetilde{X}$ is uniquely defined up to isomorphism,
and there is a free action of the cyclic group
$\widetilde{G} = \Z/n\Z$ on $\widetilde{X}$ such that
$\pi \colon \widetilde{X} \to X = \widetilde{X}/\widetilde{G}$ is the
quotient morphism.
\end{proposition}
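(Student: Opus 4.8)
The plan is to realise $\widetilde{X}$ explicitly as a cyclic cover associated to the torsion line bundle $\omega_X$. Take $n$ to be the exact order of $\omega_X$ in $\Pic X$ and fix a trivialisation $\phi \colon \omega_X^{\otimes n} \xrightarrow{\sim} \mathcal{O}_X$, equivalently a nowhere-vanishing section $\phi^{-1} \in H^0(X, \omega_X^{\otimes n})$. First I would endow the coherent sheaf
\[
  \mathcal{A} = \bigoplus_{i=0}^{n-1} \omega_X^{\otimes -i}
\]
with the structure of a sheaf of $\mathcal{O}_X$-algebras: on graded pieces the multiplication $\omega_X^{\otimes -i} \otimes \omega_X^{\otimes -j} \to \omega_X^{\otimes -(i+j)}$ is the obvious map when $i+j < n$, and is post-composed with $\phi$ (after reducing the exponent modulo $n$) when $i+j \geq n$. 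Associativity and unitality follow from $\phi$ being an isomorphism. Setting $\widetilde{X} = \operatorname{Spec}_X \mathcal{A}$ with structure morphism $\pi$, the morphism $\pi$ is finite and flat of degree $n$ because $\mathcal{A}$ is locally free of rank $n$, and $\pi_* \mathcal{O}_{\widetilde{X}} \cong \mathcal{A}$. Since $\omega_X^{\otimes -i} \cong \omega_X^{\otimes (n-i)}$, reindexing turns this into the asserted decomposition $\bigoplus_{i=0}^{n-1} \omega_X^{\otimes i}$.

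Next I would establish the geometric properties. The branch locus of such a cyclic cover is the vanishing locus of the section cutting out the algebra structure, namely $\phi^{-1}$; as this section is nowhere zero the cover is unramified, so together with flatness over the smooth variety $X$ the morphism $\pi$ is \'{e}tale and $\widetilde{X}$ is smooth. For the canonical bundle, \'{e}taleness gives $\omega_{\widetilde{X}} \cong \pi^* \omega_X$, so it suffices to trivialise $\pi^* \omega_X$. The inclusion of the summand $\omega_X^{\otimes -1} \hookrightarrow \mathcal{A} = \pi_* \mathcal{O}_{\widetilde{X}}$ corresponds under adjunction to a tautological global section $t$ of $\pi^* \omega_X$, and the defining algebra relation identifies $t^{\otimes n}$ with the pullback of $\phi^{-1}$, a nowhere-vanishing section of $\pi^* \omega_X^{\otimes n}$. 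Hence $t$ is itself nowhere zero, $\pi^* \omega_X \cong \mathcal{O}_{\widetilde{X}}$, and $\omega_{\widetilde{X}}$ is trivial.

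For the group action, fix a primitive $n$-th root of unity $\zeta \in \C$ and let the generator of $\widetilde{G} = \Z/n\Z$ act on $\mathcal{A}$ by scaling $\omega_X^{\otimes -i}$ by $\zeta^i$. This is an $\mathcal{O}_X$-algebra automorphism, hence induces an action on $\widetilde{X}$ over $X$; its invariants are the degree-zero summand, so $\mathcal{A}^{\widetilde{G}} = \mathcal{O}_X$ and $\widetilde{X}/\widetilde{G} = X$. The action is free because $\pi$ is \'{e}tale: a point with non-trivial stabiliser would force ramification. For uniqueness, I would invoke the classification of connected \'{e}tale cyclic covers of $X$ by cyclic subgroups of the torsion of $\Pic X$; minimality of $n$ makes $\widetilde{X}$ connected, and the prescribed decomposition $\pi_* \mathcal{O}_{\widetilde{X}} \cong \bigoplus_i \omega_X^{\otimes i}$ forces the classifying subgroup to be $\langle [\omega_X] \rangle$, which determines $\widetilde{X}$ up to isomorphism.

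I expect the main obstacle to be the interlocking pair of facts that make this cover ``canonical'': that $\pi$ is genuinely \'{e}tale rather than merely finite flat, and that $\pi^* \omega_X$ is trivial. Both hinge on correctly propagating the nowhere-vanishing trivialisation $\phi$ through the algebra structure on $\mathcal{A}$ --- respectively as the (empty) branch divisor and as the $n$-th power of the tautological section. Once these are in hand, the $\widetilde{G}$-action and the uniqueness statement are comparatively formal.
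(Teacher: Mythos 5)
The paper offers no proof of this proposition---it is quoted as a known background result from the cited references---so there is nothing internal to compare against; your cyclic-cover construction $\widetilde{X} = \operatorname{Spec}_X \bigl( \bigoplus_{i=0}^{n-1} \omega_X^{\otimes -i} \bigr)$, with the algebra structure twisted by a trivialisation of $\omega_X^{\otimes n}$, is exactly the standard argument used in those sources (Bridgeland--Maciocia, Huybrechts), and your treatment of \'{e}taleness, triviality of $\omega_{\widetilde{X}}$ via the tautological section, the $\Z/n\Z$-action, freeness, and uniqueness is correct. The one step you assert rather than prove is that minimality of $n$ makes $\widetilde{X}$ connected; this follows in one line from $H^0(\widetilde{X}, \mathcal{O}_{\widetilde{X}}) \iso \bigoplus_{i=0}^{n-1} H^0(X, \omega_X^{\otimes -i}) \iso \C$, since a non-trivial torsion line bundle on a projective variety has no non-zero global sections.
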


The canonical cover of a bielliptic surface will play an important
role in determining the group of autoequivalences. We list the
following facts about the canonical cover of a bielliptic surface and
leave the verification to the reader.

\begin{proposition}
  Let $S$ be a bielliptic surface which is realized as the quotient of
  $A \times B$ be a finite group $G$ of order $nk$. Then there exists
  an abelian surface $\widetilde{S}$ which is the canonical cover of
  $S$. 
\begin{itemize}
\item If $S$ is split, then $k=1$, $\widetilde{S} \iso A \times B$ and
  $G \iso \widetilde{G}$.

\item If $S$ is non-split, then $k>1$ and $\widetilde{S}$ can be
  realized as the quotient $\widetilde{S} \iso (A \times B)/H$ where
  $H$ is the cyclic subgroup of $G$ of order $k$ acting on
  $A \times B$ purely by translations. We have
  $G \iso \Z/n\Z \oplus \Z/k\Z$.
\end{itemize}
\end{proposition}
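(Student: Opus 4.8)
The plan is to exhibit the canonical cover explicitly as an intermediate quotient of $A \times B$ and then invoke the uniqueness in the previous proposition. Write the action of each $g \in G$ on the $B$-factor as $b \mapsto \mu(g)\, b + t(g)$, where $\mu(g)$ is a root of unity recording the ``linear'' (rotational) part and $t(g) \in B$ a translation; since composition of affine maps multiplies the linear parts, $g \mapsto \mu(g)$ is a homomorphism $\mu \colon G \to \mathbb{C}^{\ast}$, and its image is the cyclic group of $n$-th roots of unity, where $n = 2,2,3,3,4,4,6$ for types $1$--$7$ is read off from Table 1. First I would identify this $n$ with the order of $\omega_S$: because $\omega_{A\times B}$ is trivial and $G$ acts freely (already through the translation action on $A$, which is fixed-point-free), $\omega_S$ is the line bundle on $S = (A\times B)/G$ attached to the character by which $G$ acts on $\omega_{A\times B} \iso dz_A \wedge dz_B$. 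As $G$ acts on $A$ by translations (derivative $1$) and on $B$ by $b \mapsto \mu(g)b + t(g)$ (derivative $\mu(g)$), this character is exactly $\mu$, so $\omega_S$ has order $n$.

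Next I would set $H = \ker \mu \trianglelefteq G$. By construction $H$ is precisely the set of elements acting by a translation on the $B$-factor as well, hence acting on the abelian surface $A \times B$ purely by translations; writing $k = |H|$ we get $|G| = nk$ and $G/H \iso \Z/n$. The quotient $\widetilde S := (A\times B)/H$ is then again an abelian surface, being an isogenous quotient of $A \times B$, so $\omega_{\widetilde S}$ is trivial. Since $G$ acts freely on $A \times B$, the residual action of $\widetilde G := G/H \iso \Z/n$ on $\widetilde S$ is free: if $\bar g$ fixes $[x] \in \widetilde S$ then $gx = hx$ for some $h \in H$, whence $h^{-1}g = e$ and $\bar g = e$. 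Thus $\pi \colon \widetilde S \to \widetilde S / \widetilde G = (A\times B)/G = S$ is an \'{e}tale degree-$n$ cover with trivial-canonical total space.

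To conclude that $\pi$ is the canonical cover I would verify the pushforward formula of the previous proposition. As $G$ is abelian in every type, the isotypic decomposition of $p_* \mathcal{O}_{A\times B}$ under $G$ assigns to each character $\chi \in \widehat G$ a line bundle $M_\chi$ on $S$, the assignment $\chi \mapsto M_\chi$ being a homomorphism into $\Pic S$; the summand for $\mu$ is precisely $\omega_S$, so $M_{\mu^j} \iso \omega_S^{\otimes j}$. The characters of $\widetilde G = G/H$ are exactly the $\mu^j$ for $0 \le j \le n-1$, so decomposing $\pi_* \mathcal{O}_{\widetilde S}$ under $\widetilde G$ gives $\bigoplus_{j=0}^{n-1} \omega_S^{\otimes j}$, matching the canonical-cover formula; by the stated uniqueness, $\widetilde S$ is the canonical cover. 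The dichotomy now falls out of Table 1: for types $1,3,5,7$ the generator acts on $B$ with no translation part, so $H$ is trivial, $k=1$, $\widetilde S \iso A \times B$ and $G = \widetilde G$; for types $2,4,6$ the extra generator is a translation of order $k>1$ fixed by the linear part, giving $H \iso \Z/k$ and, reading off the table, $G \iso \Z/n \oplus \Z/k$.

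The main obstacle is the canonical-bundle identification of the first two paragraphs: pinning down that the $G$-linearization of $\omega_{A\times B}$ is governed by the single character $\mu$, so that $\omega_S$ has order exactly $n$ and corresponds to $\mu$ under the isotypic decomposition. Once that correspondence is secured, the construction of $\widetilde S$, the freeness of the $\widetilde G$-action, and the matching of the pushforward decomposition with $\bigoplus_j \omega_S^{\otimes j}$ are routine, and the per-type conclusions are immediate from Table 1.
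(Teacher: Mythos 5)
The paper offers no proof of this proposition at all --- it explicitly ``leaves the verification to the reader'' --- so there is no argument of the paper's to compare yours against; the only question is whether your verification is sound, and it is. Your route is the natural one: split off the linear part $\mu\colon G \to \C^{\ast}$ of the action on $B$, identify $\omega_S$ with the line bundle $M_\mu$ attached to $\mu$ in the isotypic decomposition of $p_*\mathcal{O}_{A\times B}$ (using that $G$ acts freely, via translations on the $A$-factor), set $H = \ker\mu$, and check that $\widetilde{S} = (A\times B)/H$ with the residual free $\Z/n$-action satisfies the pushforward formula $\pi_*\mathcal{O}_{\widetilde{S}} \iso \bigoplus_{j=0}^{n-1}\omega_S^{\otimes j}$ characterizing the canonical cover. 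The freeness argument, the identification of the characters of $G/H$ with the powers $\mu^j$, and the type-by-type reading of Table 1 (giving $H \iso \Z/k$ generated by the translation $b \mapsto b + \beta$, hence $G \iso \Z/n \oplus \Z/k$ in types $2,4,6$, and $H$ trivial in types $1,3,5,7$) are all correct.

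Two steps deserve to be made explicit rather than asserted. First, the claim that $\omega_S$ has order \emph{exactly} $n$ (not a proper divisor) requires injectivity of the homomorphism $\widehat{G} \to \Pic S$, $\chi \mapsto M_\chi$; this follows from connectedness of $A \times B$ (equivalently, of $\widetilde{S}$), since a cyclic \'{e}tale cover with connected total space has associated line bundle of order equal to the degree. Without this, your appeal to the uniqueness statement could misfire, because the canonical cover has degree equal to the order of $\omega_S$. Second, your use of ``the stated uniqueness'' implicitly reads the cited proposition as saying that \emph{any} degree-$n$ \'{e}tale cover with trivial canonical bundle and the stated pushforward decomposition is the canonical cover; this is the intended reading, and it can be justified directly since $\pi$ is finite, so $\widetilde{S} \iso \mathbf{Spec}_S\bigl(\pi_*\mathcal{O}_{\widetilde{S}}\bigr)$, and the isotypic algebra structure on $\bigoplus_j \omega_S^{\otimes j}$ agrees, up to rescaling by roots of unity, with that defining the canonical cover. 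With these two points spelled out, your proof is complete.
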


\begin{remark}\label{rem:can-cover-bielliptic}
The canonical cover $\widetilde{S}$ has two fibrations
\begin{align*}
  \widetilde{p}_A &\colon \widetilde{S} \to A/H\\
  \widetilde{p}_B &\colon \widetilde{S} \to B/H.
\end{align*}

Both $\widetilde{p}_A$ and $\widetilde{p}_B$ are smooth fibrations
with fibres isomorphic to $B$ and $A$ respectively. We will denote the
class of these fibres by $\widetilde{B}$ and $\widetilde{A}$ in
$\num(\widetilde{S})$ respectively. The degree of the intersection
$\widetilde{B} \cdot \widetilde{A} = k = |H|$.
\end{remark}

We summarize the description of $\num(S)$  given by Serrano \cite[\S
1]{MR1038716} in the following lemma.

\begin{lemma}\label{lem:DesOfNum}
  Let $|G| = nk$ and $n = \deg \pi$ where
  $\pi \colon \widetilde{S} \to S$ is the canonical cover of $S$.
\begin{enumerate}
\item The second rational cohomology group $H^2(S,\Q)$ is generated by
  $A$ and $B$.
\item Suppose $S$ is split. Then $k=1$ and the group $\num(S)$ is
  generated by $\frac{1}{n}A$ and $B$.
\item Suppose $S$ is non-split. Then the group $\num(S)$ is generated
  by $\frac{1}{n}A$ and $\frac{1}{k}B$.
\end{enumerate}
\end{lemma}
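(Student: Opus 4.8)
The plan is to determine $\num(S)$ as a rank-two unimodular lattice and then to pin down its generators using the two elliptic fibrations together with the covering maps. First I would record the two structural facts that make $\num(S)$ rigid. Since $S=(A\times B)/G$ is bielliptic we have $e(S)=e(A\times B)/|G|=0$ and $b_1(S)=2q=2$, so $b_2(S)=2$; moreover $p_g=0$ forces $h^{2,0}=h^{0,2}=0$, so every class in $H^2(S,\Z)$ is of type $(1,1)$. Hence $\num(S)$ is isomorphic to $H^2(S,\Z)$ modulo torsion, a lattice of rank $2$, and by Poincar\'e duality it is unimodular; by the Hodge index theorem its signature is $(1,1)$.

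Next I would compute the intersection numbers of $A$ and $B$. As fibre classes of the two fibrations, $A^2=B^2=0$. For the crucial number $A\cdot B$ I would pass to the canonical cover $\pi\colon\widetilde S\to S$ of degree $n$ from Remark \ref{rem:can-cover-bielliptic}, where $\widetilde A\cdot\widetilde B=k$ is known. Comparing $p_B,p_A$ on $S$ with $\widetilde p_B,\widetilde p_A$ on $\widetilde S$ through the degree-$n$ quotient maps $B/H\to B/G$ and $A/H\to A/G$, the preimage of a general fibre splits into $n$ fibres of the corresponding fibration on $\widetilde S$, so that $\pi^*A=n\widetilde A$ and $\pi^*B=n\widetilde B$. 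The projection formula $\pi^*A\cdot\pi^*B=n\,(A\cdot B)$ then reads $n^2k=n\,(A\cdot B)$, giving $A\cdot B=nk=|G|$. In particular, since $A^2=B^2=0$ while $A\cdot B\neq 0$, the classes $A$ and $B$ are linearly independent and so span the two-dimensional space $H^2(S,\Q)$, which is part (1).

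To finish I would identify the primitive generators using unimodularity. The fibration $p_B\colon S\to B/G\iso\mathbb{P}^1$ has multiple fibres whose multiplicities agree with those of $B\to B/G$; the automorphism part $\widetilde G=G/H\iso\Z/n$ produces a fibre of multiplicity $n$, whose reduced class is $\tfrac1n A$. Thus $\tfrac1n A\in\num(S)$, and as $n$ is the full multiplicity this class is primitive. Since $\tfrac1n A\cdot B=k$, the sublattice $\langle\tfrac1n A,B\rangle$ has Gram determinant $-k^2$, so comparing with the unimodular lattice $\num(S)$ via $|\mathrm{disc}|=[\num(S):L]^2$ shows it has index $k$. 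In the split case $k=1$ this sublattice is already all of $\num(S)$, giving the generators $\tfrac1n A,B$ of (2). In the non-split case I would identify the index-$k$ overlattice: because $\tfrac1n A$ remains primitive and the intersection form stays unimodular, the extra generator is forced to be $\tfrac1k B$ (the unique isotropy-preserving choice), yielding (3).

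I expect the main obstacle to be the group-theoretic bookkeeping in the intersection computation: keeping track of how $G$, the translation subgroup $H$, and the automorphism quotient $\widetilde G$ act on each of $A$ and $B$, so that the degrees of the curve quotients $B/H\to B/G$ and $A/H\to A/G$ and the pullback identities $\pi^*A=n\widetilde A$, $\pi^*B=n\widetilde B$ are genuinely justified rather than asserted; relatedly, one must verify that $p_B$ really carries a fibre of multiplicity exactly $n$ so that $\tfrac1n A$ is integral and primitive. Once $A\cdot B=nk$ and the primitivity of $\tfrac1n A$ are in place, the unimodularity of $\num(S)$ makes the identification of the remaining generator essentially formal.
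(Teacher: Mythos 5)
The paper itself gives no proof of this lemma at all: it is quoted from Serrano's paper, so your attempt has to be judged on its own terms rather than against an internal argument. On those terms, parts (1) and (2) are correct and nicely self-contained: the computation $A\cdot B=nk$ via $\pi^*A=n\widetilde{A}$, $\pi^*B=n\widetilde{B}$ and the projection formula is right; the table shows the stabilizer in $G$ of the origin of $B$ is the full automorphism part $\Z/n$, so $p_B$ has a fibre of multiplicity $n$ and $\tfrac1n A\in\num(S)$; and in the split case the discriminant--index formula alone forces $\num(S)=\langle\tfrac1n A,B\rangle$, with no primitivity statement needed.

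Part (3), however, has a genuine gap --- two, in fact. First, the primitivity of $\tfrac1n A$ is not proved: that $n$ is the maximal fibre multiplicity only shows this particular effective class lies in $\num(S)$; it does not exclude a (non-effective) divisor class $D$ with $mD\equiv\tfrac1n A$ for some $m\mid k$. A correct argument exists --- pull back along $\pi$, note $\pi^*\bigl(\tfrac{1}{mn}A\bigr)=\tfrac1m\widetilde{A}$, and use that on the abelian surface $\widetilde{S}$ every isotropic class is an integer multiple of an elliptic curve class, so $\widetilde{A}$ is primitive --- but you do not give it, and without it the overlattice $\Z\tfrac{1}{nk}A\oplus\Z B$ is not excluded. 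Second, even granting primitivity, the claim that $\tfrac1k B$ is ``the unique isotropy-preserving choice'' is false when $k=2$ (types 2 and 6): the index-$2$ overlattice $N_3=\Z\tfrac1n A+\Z B+\Z\bigl(\tfrac{1}{2n}A+\tfrac12 B\bigr)$ is integral and unimodular (it is the odd lattice $\langle 1\rangle\oplus\langle -1\rangle$), and $\tfrac1n A$ is still primitive in it; to exclude it one must use that the intersection form of $S$ is even (by adjunction, $D^2=D^2+D\cdot K_S\in 2\Z$ since $K_S\equiv 0$), which you never invoke. Note also that your argument never actually exhibits a divisor class representing $\tfrac1k B$: since $p_A$ has no multiple fibres such a class is not a reduced fibre, and producing it is precisely the nontrivial content of the non-split case. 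With a genuine proof of primitivity plus evenness, your lattice-theoretic strategy does close; as written it does not.
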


Consider the category $\SpCoh(S)$ of coherent
$\pi_*(\mathcal{O}_S)$-modules on $S$. A sheaf $E$ lies in $\SpCoh(S)$
if and only if $E \otimes \omega_S \iso E$. We call such sheaves
\emph{special}. The following results from \cite[\S
2]{1998math.....11101B},\cite[\S 7.2]{MR2511017} relate this category
to the category of coherent sheaves on $\widetilde{S}$.

\begin{lemma}
  The functor
\[
\pi_* \colon \Coh(\widetilde{S}) \to \SpCoh(S)
\]

is an equivalence.
\end{lemma}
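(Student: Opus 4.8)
The plan is to exhibit the functor $\pi_*$ explicitly as an equivalence by constructing a quasi-inverse and checking the usual categorical axioms, rather than proving essential surjectivity and full faithfulness separately from scratch. The key structural input is that $\pi \colon \widetilde{S} \to S$ is a finite \'{e}tale Galois cover with Galois group $\widetilde{G} = \Z/n\Z$, and that $\pi_* \mathcal{O}_{\widetilde{S}} \iso \bigoplus_{i=0}^{n-1} \omega_S^{\otimes i}$ is a sheaf of $\mathcal{O}_S$-algebras. This realizes $\SpCoh(S)$, the category of $\pi_* \mathcal{O}_{\widetilde{S}}$-modules, as the module category over this algebra. The main engine is then the standard equivalence between quasicoherent sheaves on an affine morphism and modules over the associated pushforward algebra.

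First I would record that since $\pi$ is affine (indeed finite), the functor $\pi_*$ lands in $\pi_* \mathcal{O}_{\widetilde{S}}$-modules and in fact defines an equivalence
\[
  \Coh(\widetilde{S}) \xrightarrow{\ \sim\ } \Coh\bigl(\pi_*\mathcal{O}_{\widetilde{S}}\text{-mod}\bigr),
\]
with quasi-inverse given by $\mathcal{M} \mapsto \mathcal{M} \otimes_{\pi_* \mathcal{O}_{\widetilde{S}}} (-)$ pulled back along $\pi$; this is the relative Spec construction applied to a finite morphism and is entirely formal. The content that remains is to identify the target category $\pi_* \mathcal{O}_{\widetilde{S}}\text{-mod}$ with $\SpCoh(S)$ as defined in the excerpt. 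By hypothesis $\SpCoh(S)$ consists of coherent $\pi_* \mathcal{O}_{\widetilde{S}}$-modules, so the two categories are literally the same category, and the lemma follows once the affine-morphism equivalence is invoked.

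The one genuine point to verify is the stated intrinsic characterization: a coherent sheaf $E$ on $S$ underlies a $\pi_* \mathcal{O}_{\widetilde{S}}$-module structure, i.e. lies in $\SpCoh(S)$, if and only if $E \otimes \omega_S \iso E$. For the forward direction, a $\pi_* \mathcal{O}_{\widetilde{S}}$-module structure supplies multiplication by the summand $\omega_S \subset \pi_* \mathcal{O}_{\widetilde{S}}$, which yields a map $E \otimes \omega_S \to E$; since $\omega_S$ has order $n$ in $\Pic S$, iterating shows this map is an isomorphism. For the converse, an isomorphism $E \otimes \omega_S \iso E$ can be used to build the action of the algebra $\bigoplus_i \omega_S^{\otimes i}$ on $E$, though one must check compatibility so that this genuinely defines a module structure over the algebra. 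I expect this compatibility check---ensuring the chosen isomorphisms multiply correctly and descend to a well-defined $\pi_*\mathcal{O}_{\widetilde{S}}$-action, and that the resulting functor is compatible with the affine equivalence---to be the main obstacle, since an arbitrary isomorphism $E \otimes \omega_S \iso E$ need not be normalized. This is most cleanly handled by passing to the $\widetilde{G}$-equivariant picture: $\SpCoh(S)$ is equivalent to the category of $\widetilde{G}$-equivariant sheaves on $\widetilde{S}$, the condition $E \otimes \omega_S \iso E$ reflects invariance under the deck transformation, and the free action of $\widetilde{G}$ makes the equivariant descent clean. I would cite the references \cite{1998math.....11101B} and \cite[\S 7.2]{MR2511017} for the details of this equivariant formulation.
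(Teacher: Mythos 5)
The paper offers no proof of this lemma: it is quoted from \cite[\S 2]{1998math.....11101B} and \cite[\S 7.2]{MR2511017}, so there is no internal argument to compare against, and the right question is whether your proposal is sound. It is. Since the paper defines $\SpCoh(S)$ to be the category of coherent $\pi_*(\mathcal{O}_{\widetilde{S}})$-modules (the text's ``$\pi_*(\mathcal{O}_S)$'' is an evident typo), the lemma is exactly the finite/affine-morphism equivalence you invoke, including the point your route gets right automatically: morphisms in $\SpCoh(S)$ are module homomorphisms, not arbitrary $\mathcal{O}_S$-linear maps, so $\pi_*$ is an equivalence onto the module category even though it is not fully faithful as a functor into $\Coh(S)$. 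You are also correct to separate off the genuinely delicate statement, which is not the lemma itself but the paper's preceding assertion that a coherent sheaf $E$ admits such a module structure if and only if $E \otimes \omega_S \iso E$: an arbitrary isomorphism $E \otimes \omega_S \to E$ need not have $n$-th iterate equal to the identity, so it must be renormalized, or one passes to $\widetilde{G}$-equivariant sheaves on $\widetilde{S}$, where freeness of the action makes descent clean. Deferring that verification to the equivariant formulation in the cited references is precisely what the paper itself does, so your proposal matches the intended proof in both substance and scope.
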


This descends to the level of derived categories in the following way:
\begin{proposition}\label{prop:derived-sheav-classes}
  Let $E$ be an object of $D(S)$. Then there is an object
  $\widetilde{E}$ of $D(\widetilde{S})$ such that
  $\mathbf{R}\pi_*(\widetilde{E}) \iso E$ if and only if
  $E \otimes \omega_S \iso E$.
\end{proposition}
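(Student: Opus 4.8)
The plan is to prove the two implications separately, so I begin by recording the structural facts I will lean on. Since $\pi\colon\widetilde S\to S$ is finite \'etale of degree $n$, the pushforward $\mathbf R\pi_*=\pi_*$ is exact, the projection formula holds, and $\pi^*\omega_S\iso\omega_{\widetilde S}\iso\mathcal O_{\widetilde S}$ because $\pi$ is \'etale and $\widetilde S$ is abelian. Moreover $\pi_*\mathcal O_{\widetilde S}\iso\bigoplus_{i=0}^{n-1}\omega_S^{\otimes i}=:\mathcal A$ as a sheaf of $\mathcal O_S$-algebras, so by the projection formula $\mathbf R\pi_*\mathbf L\pi^* E\iso E\otimes\mathcal A\iso\bigoplus_{i=0}^{n-1}E\otimes\omega_S^{\otimes i}$ for every $E\in D(S)$. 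The forward implication is then immediate: if $E\iso\mathbf R\pi_*(\widetilde E)$ the projection formula gives
\[
E\otimes\omega_S\iso\mathbf R\pi_*\bigl(\widetilde E\otimes\mathbf L\pi^*\omega_S\bigr)\iso\mathbf R\pi_*\bigl(\widetilde E\otimes\mathcal O_{\widetilde S}\bigr)\iso\mathbf R\pi_*(\widetilde E)\iso E .
\]

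For the converse, fix an isomorphism $\phi\colon E\otimes\omega_S\xrightarrow{\sim}E$. Under the adjunction isomorphism
\[
\End_{D(\widetilde S)}(\mathbf L\pi^* E)\iso\Hom_{D(S)}(E,\mathbf R\pi_*\mathbf L\pi^* E)\iso\bigoplus_{i=0}^{n-1}\Hom_{D(S)}(E,E\otimes\omega_S^{\otimes i}),
\]
the iterates of $\phi$ assemble into an invertible endomorphism $\Psi$ of $\mathbf L\pi^* E$ representing multiplication by the degree-one part $\omega_S\subset\mathcal A$; by construction $\mathbf R\pi_*(\Psi)$ is the deck-group generator acting on $\mathbf R\pi_*\mathbf L\pi^* E\iso\bigoplus_i E\otimes\omega_S^{\otimes i}$, scaling the $i$-th summand by $\zeta^i$ for a fixed primitive $n$-th root of unity $\zeta$. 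Since $\widetilde S$ is smooth and projective the algebra $\End_{D(\widetilde S)}(\mathbf L\pi^* E)$ is finite-dimensional over $\C$, so I may rescale $\Psi$ by an element of the commutative subalgebra $\C[\Psi^{n}]$ to arrange $\Psi^{n}=\operatorname{id}$; this $n$-th-root step is where characteristic zero enters, and it makes $\Psi$ semisimple, defining an honest action of $\widetilde G\iso\Z/n\Z$ on $\mathbf L\pi^* E$.

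Now, because $\Psi$ has order $n$ and $\C$ has characteristic zero, the spectral idempotents $e_j=\frac1n\sum_{k=0}^{n-1}\zeta^{-jk}\Psi^{k}$ are orthogonal idempotents of $\End_{D(\widetilde S)}(\mathbf L\pi^* E)$ summing to the identity. As $D(\widetilde S)=D^b\Coh(\widetilde S)$ is idempotent complete, each $e_j$ has an image $\widetilde E_j\in D(\widetilde S)$ with $\mathbf L\pi^* E\iso\bigoplus_j\widetilde E_j$. Applying $\mathbf R\pi_*$ and using that $\mathbf R\pi_*(\Psi)$ acts on $\bigoplus_i E\otimes\omega_S^{\otimes i}$ with $n$ distinct eigenvalues whose eigenspaces are the summands $E\otimes\omega_S^{\otimes i}$, each $\mathbf R\pi_*(\widetilde E_j)$ is one of these twists $E\otimes\omega_S^{\otimes i}\iso E$. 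Any such $\widetilde E_j$ is the required object.

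The forward direction and the bookkeeping are routine; the real content is in the converse, where I expect two steps to need care. The first is upgrading the mere isomorphism $\phi$ to a semisimple finite-order automorphism $\Psi$: this is the $n$-th-root adjustment, and it genuinely uses characteristic zero together with finite-dimensionality of the endomorphism algebra, since a priori $\Psi^{n}$ is only some automorphism and need not be semisimple. The second is the identification of $\mathbf R\pi_*(\Psi)$ with the grading (deck-group) action on $E\otimes\mathcal A$, which is what guarantees that the spectral summands descend to twists of $E$; this amounts to matching the adjunction description of $\End_{D(\widetilde S)}(\mathbf L\pi^* E)$ with the $\widetilde G$-module structure of $\mathcal A=\pi_*\mathcal O_{\widetilde S}$. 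I would stress that no soft argument suffices here: the essential image of $\mathbf R\pi_*$ is \emph{not} closed under direct summands, since every $E$ is a summand of $\mathbf R\pi_*\mathbf L\pi^* E\iso\bigoplus_i E\otimes\omega_S^{\otimes i}$ while a general $E$ does not satisfy $E\otimes\omega_S\iso E$, so the hypothesis must be fed directly into the construction of the descent datum $\Psi$. Conceptually this is the derived shadow of the equivalence $\pi_*\colon\Coh(\widetilde S)\xrightarrow{\sim}\SpCoh(S)$: under $D(S)\iso D^{\widetilde G}(\widetilde S)$ the functor $-\otimes\omega_S$ becomes twisting the linearisation by a generator of $\widehat{\widetilde G}$, and the proposition says exactly that an object is induced from $D(\widetilde S)$ if and only if it is fixed by that twist.
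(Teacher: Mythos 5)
Your forward implication is correct, and the architecture of your converse --- turn $\phi$ into an endomorphism $\Psi$ of $\mathbf{L}\pi^*E$ via the free-module adjunction, normalise $\Psi^n=\mathrm{id}$ by an $n$-th root taken in the finite-dimensional commutative algebra $\C[\Psi^n]$, split the spectral idempotents $e_j$ using idempotent completeness of $D(\widetilde S)$, and push forward --- is sound; note that the paper itself gives no proof of this proposition (it cites Bridgeland--Maciocia), and your module-theoretic route is in the same spirit as the cited argument. However, one claim in your converse is false, and it is exactly the claim on which your final step rests: $\mathbf{R}\pi_*(\Psi)$ is \emph{not} the deck-group generator acting on $\bigoplus_i E\otimes\omega_S^{\otimes i}$ by $\zeta^i$ on the $i$-th summand. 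Since $\Psi$ corresponds, under the algebra isomorphism $\End_{D(\widetilde S)}(\mathbf{L}\pi^*E)\iso\bigoplus_i\Hom_{D(S)}(E,E\otimes\omega_S^{\otimes i})$, to the degree-one element $\phi^{-1}$, its pushforward is the \emph{cyclic shift}: it carries the summand $E\otimes\omega_S^{\otimes i}$ to $E\otimes\omega_S^{\otimes(i+1)}$ by $\phi^{-1}\otimes\mathrm{id}$, with the last summand wrapping around through the trivialisation $\omega_S^{\otimes n}\iso\mathcal{O}_S$ that defines the algebra $\mathcal{A}$. The diagonal operator you describe is $\mathrm{id}_E$ tensored with the grading (deck) action on $\pi_*\mathcal{O}_{\widetilde S}$; it exists for \emph{every} $E$, with no reference to $\phi$, and it cannot in general be of the form $\mathbf{R}\pi_*(\text{endomorphism of }\mathbf{L}\pi^*E)$ --- if it were, its spectral projections would exhibit every object, e.g.\ $E=\mathcal{O}_S$, as a pushforward, contradicting the ``only if'' direction you have just proved. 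Your own (correct) remark that no soft argument can suffice already rules out your identification: an operator defined for all $E$ cannot be the carrier of the hypothesis.

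The gap is repairable because the conclusion of the last step is still true for the actual operator, but this needs an argument you have not supplied. Identify $\bigoplus_i E\otimes\omega_S^{\otimes i}$ with $E^{\oplus n}$ by iterates of $\phi$; then $\mathbf{R}\pi_*(\Psi)$ becomes the matrix with subdiagonal entries $\mathrm{id}_E$ and corner entry the automorphism $u$ given by the $n$-fold iterate of $\phi^{-1}$ followed by the trivialisation, and your normalising element becomes $\mathrm{diag}(v,\dots,v)$ with $v\in\C[u]$, $v^n=u^{-1}$. Writing $T$ for the normalised operator, one checks that $w_j=(\zeta^{-ij}v^{i})_{0\le i<n}\colon E\to E^{\oplus n}$ and $w_j'=\tfrac1n(\zeta^{ij}v^{-i})_{0\le i<n}\colon E^{\oplus n}\to E$ satisfy $Tw_j=\zeta^jw_j$, $w_j'T=\zeta^jw_j'$, $w_j'w_j=\mathrm{id}_E$ and $w_jw_j'=\mathbf{R}\pi_*(e_j)$; hence the image of each $\mathbf{R}\pi_*(e_j)$ is a \emph{diagonally} embedded copy of $E$ --- isomorphic to $E$, but not equal to any summand $E\otimes\omega_S^{\otimes i}$. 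Alternatively one can avoid the computation: the deck transformation $g^*$ rescales $\Psi$ by $\zeta$ under the canonical identification $g^*\mathbf{L}\pi^*E\iso\mathbf{L}\pi^*E$, hence permutes the idempotents $e_j$ cyclically, so the objects $\mathbf{R}\pi_*(\widetilde E_j)$ are pairwise isomorphic; since they sum to $E^{\oplus n}$ and $D(S)$ is Krull--Schmidt, each is isomorphic to $E$. With either repair your proof is complete; the remaining ingredients (invertibility of $\Psi$, existence of the $n$-th root in $\C[\Psi^n]$ using characteristic zero and finite-dimensionality, idempotent completeness of $D^b\Coh(\widetilde S)$) are correct as you stated them.
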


\begin{remark}
  Recall $\pi_! \colon N(\widetilde{S}) \to N(S)$ is defined by
  (\cite[\S 5.2]{MR2244106})
  \[
    \pi_![E] = \sum_{i \in \Z} (-1)^i[R^i\pi_*(E)].
  \]
  After taking Chern characters, $\pi_!$ coincides with the
  pushforward $\pi_*$ on cohomology by Grothendieck-Riemann-Roch. This
  is due to the Todd classes of $\widetilde{S}$ and $S$ being
  $(1,0,0)$.
\end{remark}

On the level of the numerical Grothendieck group $N(S)$ consider
the subgroup $\Delta$ of \emph{special classes}
\[
  \Delta = \im(\pi_!) = \left\{ [E] \in N(S) \middle| [E] =
    \pi_!([\widetilde{E})]) \text{ for some } [\widetilde{E}] \in
    N(\widetilde{S}) \right\}.
\]

\begin{remark}
  The class $[E]$ of a special object $E \in D(S)$ lies in $\Delta$ by
  Proposition \ref{prop:derived-sheav-classes} as there exists
  $\widetilde{E} \in D(\widetilde{S})$ such that
  $[E] = [\pi_*(\widetilde{E})] = \pi_![\widetilde{E}]$.
\end{remark}

The subgroup $\Delta$ is important because the image of
autoequivalences of $D(S)$ under $\rho$ preserves $\Delta$.

\begin{proposition}\label{prop:auto-delta}
  Let $\Phi \in \Aut D(S)$. Then $\rho(\Phi)$ preserves $\Delta$.
\end{proposition}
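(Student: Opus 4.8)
The plan is to characterise $\Delta$ intrinsically as the set of classes of \emph{special} objects, and then to show that every autoequivalence preserves specialness by exploiting its compatibility with the Serre functor.

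First I would record the following reformulation of $\Delta$. By Proposition \ref{prop:derived-sheav-classes}, an object $E \in D(S)$ satisfies $E \otimes \omega_S \cong E$ if and only if $E \cong \mathbf{R}\pi_*(\widetilde{E})$ for some $\widetilde{E} \in D(\widetilde{S})$; taking classes and using $[\mathbf{R}\pi_*(\widetilde{E})] = \pi_![\widetilde{E}]$ gives
\[
  \Delta = \im(\pi_!) = \{\, [E] \mid E \in D(S),\ E \otimes \omega_S \cong E \,\}.
\]
In particular, every element of $\Delta$ is represented by an honest special object, not merely by a virtual class. This reformulation is essential, because tensoring by $\omega_S$ acts trivially on $N(S)$ (as $\omega_S$ is numerically trivial), so specialness cannot be detected on $N(S)$ directly and must be tracked at the level of objects.

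Next I would bring in the Serre functor $\mathbb{S}$ of $D(S)$, which acts as $\mathbb{S}(E) \cong E \otimes \omega_S[2]$ since $S$ is a smooth projective surface. The key observation is that $E$ is special precisely when $\mathbb{S}(E) \cong E[2]$: indeed $E \otimes \omega_S \cong E$ gives $\mathbb{S}(E) \cong E[2]$, and conversely $\mathbb{S}(E) \cong E[2]$ yields $E \otimes \omega_S \cong E$ after applying $[-2]$. Any exact autoequivalence commutes with the Serre functor up to natural isomorphism, so $\Phi \circ \mathbb{S} \cong \mathbb{S} \circ \Phi$ (see \cite[\S 1.3]{MR2244106}). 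Hence for a special object $E$,
\[
  \mathbb{S}(\Phi(E)) \cong \Phi(\mathbb{S}(E)) \cong \Phi(E[2]) \cong \Phi(E)[2],
\]
which shows that $\Phi(E)$ is again special.

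Combining the two steps finishes the argument: given $\delta \in \Delta$, write $\delta = [E]$ with $E$ special; then $\rho(\Phi)(\delta) = [\Phi(E)]$ with $\Phi(E)$ special, so $\rho(\Phi)(\delta) \in \Delta$ and thus $\rho(\Phi)(\Delta) \subseteq \Delta$. Applying the same reasoning to $\Phi^{-1}$ and using $\rho(\Phi^{-1}) = \rho(\Phi)^{-1}$ upgrades this to an equality, so $\rho(\Phi)$ preserves $\Delta$. I expect the only genuine subtlety to be the point flagged above, namely that $\Delta$ must be handled through its description by objects rather than through its Chern-character image, while the compatibility of $\Phi$ with the Serre functor is the standard input that does the real work.
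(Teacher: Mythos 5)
Your proof is correct, but it takes a genuinely different route from the paper. The paper invokes the canonical-cover lifting theorem (Bridgeland--Maciocia, also \cite[Theorem 7.13]{MR2511017}): every $\Phi \in \Aut D(S)$ lifts to an equivariant autoequivalence $\widetilde{\Phi}$ of $D(\widetilde{S})$ with $\mathbf{R}\pi_* \circ \widetilde{\Phi} \iso \Phi \circ \mathbf{R}\pi_*$, whence $\rho(\Phi)(\pi_!(w)) = \pi_!(\rho(\widetilde{\Phi})(w)) \in \Delta$ directly from $\Delta = \im(\pi_!)$. You instead characterise $\Delta$ intrinsically as the set of classes of special objects and use the formal fact that any exact equivalence commutes with the Serre functor, so specialness (equivalently $\mathbb{S}(E) \iso E[2]$) is preserved. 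Your route is more elementary: it trades the nontrivial equivariant lifting theorem for Proposition \ref{prop:derived-sheav-classes} (which the paper already states) plus a purely categorical property of Serre functors; the paper's route yields the stronger intertwining identity $\rho(\Phi) \circ \pi_! = \pi_! \circ \rho(\widetilde{\Phi})$, which keeps track of where classes come from upstairs. The one point you should make explicit is the inclusion $\im(\pi_!) \subseteq \{[E] \mid E \otimes \omega_S \iso E\}$: an element of $N(\widetilde{S})$ is a priori only a virtual class, so you need the standard observation that every class in $K(\widetilde{S})$ is realised by an honest object of $D(\widetilde{S})$ (since $[E_0] - [E_1] = [E_0 \oplus E_1[1]]$), after which $\mathbf{R}\pi_*$ of that object is special and represents the given element of $\Delta$. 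With that line added, your argument is complete.
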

\begin{proof}
Any autoequivalence $\Phi \in \Aut D(S)$ lifts to an equivariant
autoequivalence $\widetilde{\Phi} \in \Aut D(\widetilde{S})$ by
\cite[Theorem 4.5]{MR1629929} or \cite[Theorem 7.13]{MR2511017} such
that
\[
  \mathbf{R}\pi_* \circ \widetilde{\Phi} \iso \Phi \circ
  \mathbf{R}\pi_*.
\] 
Consider $v \in \Delta$ and $w \in N(\widetilde{S})$ such that
$v = \pi_!(w)$. Then
\[
  \rho(\Phi)(v) = \rho(\Phi)(\pi_!(w)) =
  \pi_!(\rho(\widetilde{\Phi})(w)) \in \Delta
\]
Therefore $\rho(\Phi)(\Delta) \subset \Delta$.
\end{proof}

\subsection{Relative Fourier-Mukai Transforms}
\label{sec:relat-four-mukai}

Recall that a relatively minimal elliptic surface is a projective
surface $X$ together with a fibration $\pi \colon X \to C$ with
generic fibre isomorphic to an elliptic curve and with no
$(-1)$-curves in the fibres. We will only consider relatively minimal
elliptic surfaces.

For an elliptic surface $\pi \colon X \to C$ define $\lambda_{\pi}$ to
be the smallest positive integer such that $\pi$ has a holomorphic
$\lambda_{\pi}$-multisection. This is equivalent to 
\[ 
  \lambda_{\pi} = \min \{f \cdot D >0 | D \in \num(X)\}.
\]
where $f$ is the class of a smooth fibre of $\pi$.

Suppose $a >0, b \in \Z$ with $gcd(a \lambda_{\pi},b)=1$.  Then we can
construct the moduli space $J_X(a,b)$ of pure dimension 1 stable
sheaves of class $(a,b)$ supported on a smooth fibre of $\pi$.
Bridgeland constructed equivalences between the derived category of
$X$ and the derived category of $J_X(a,b)$ \cite{MR1629929}. We call
these equivalences \emph{relative Fourier-Mukai transforms}.

\begin{theorem}\label{thm:relative-fm}{\cite[Theorem
    5.3]{MR1629929}} Let $\pi \colon X \to C$ be an elliptic surface
  and take an element
 \[ 
   \begin{pmatrix}
     c & a\\
     d & b\\
   \end{pmatrix} 
   \in \SL_2(\Z)
\] 
such that $\lambda_X$ divides $d$ and $a>0$. Let $Y$ be the elliptic
surface $J_X(a,b)$ over $C$. Then there exists sheaves $\mathcal{P}$
on $X \times Y$, flat and strongly simple over both factors such that
for any point $(x,y) \in X \times Y$, $\mathcal{P}_y$ has Chern class
$(0,af,b)$ on $X$ and $\mathcal{P}_x$ has Chern class $(0,af,c)$ on
$Y$. For any such sheaf $\mathcal{P}$, the resulting functor
$\Phi = \Phi^{\mathcal{P}}_{Y \to X} \colon D(Y) \to D(X)$ is an
equivalence and satisfies
\begin{equation*}
\begin{pmatrix}
  r(\Phi(E))\\
  d(\Phi(E))\\
  \end{pmatrix}
=
\begin{pmatrix}
  c & a\\
  d & b\\
\end{pmatrix}
\begin{pmatrix}
  r(E)\\
  d(E)
\end{pmatrix}
\end{equation*}
for all objects $E$ of $D(Y)$.  
\end{theorem}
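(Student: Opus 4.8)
The plan is to realise $\Phi$ as an integral transform whose kernel is a universal family of stable fibre sheaves, and to prove it is an equivalence by checking that the images of skyscrapers form an orthogonal spanning class, following the criterion of Bridgeland and Bondal--Orlov. The first step is the construction of $Y$ itself. For a smooth fibre $X_c$ the relevant objects are pure one-dimensional stable sheaves of class $(0,af,b)$, equivalently vector bundles of rank $a$ and degree $b$ on the elliptic curve $X_c$. Since the matrix lies in $\SL_2(\Z)$ we have $cb-ad=1$, which forces $\hcf(a,b)=1$; together with the coprimality and divisibility hypotheses on $\lambda_X$ this guarantees there are no strictly semistable sheaves of this class and that the Brauer obstruction to a universal sheaf vanishes. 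Hence the relative moduli problem is represented by a fine moduli space $Y = J_X(a,b)$, which is again a relatively minimal elliptic surface over $C$ carrying a universal sheaf on the fibre product $X\times_C Y$.

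Pushing this universal sheaf forward along the closed immersion $X\times_C Y \hookrightarrow X\times Y$ produces the kernel $\mathcal{P}$, which is flat over each factor because its restriction to every fibre of the two projections is a stable sheaf of fixed Hilbert polynomial. For strong simplicity over $Y$ I would verify that $\Hom^i_{D(X)}(\mathcal{P}_y,\mathcal{P}_{y'}) = 0$ for all $i$ whenever $y\neq y'$, and that $\Hom(\mathcal{P}_y,\mathcal{P}_y)=\C$. When $y,y'$ lie over distinct points of $C$ their supports are disjoint fibres and every $\Hom$ group vanishes; when they lie on the same fibre $X_c$ they are non-isomorphic stable sheaves of equal slope, so $\Hom = 0$ by stability and $\Ext^2 = 0$ by Serre duality, using that $\omega_X$ restricts trivially to the elliptic fibre $X_c$ and hence $\mathcal{P}_y\otimes\omega_X \iso \mathcal{P}_y$. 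Since $f^2=0$ forces $\chi(\mathcal{P}_y,\mathcal{P}_{y'})=0$, the remaining $\Ext^1$ also vanishes. The identical computation with the factors exchanged, using $\hcf(a,c)=1$ (again from $cb-ad=1$), gives strong simplicity over $X$.

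With $\mathcal{P}$ in hand, $\Phi = \Phi^{\mathcal{P}}_{Y\to X}$ sends the skyscraper at $y$ to $\mathcal{P}_y$, and the skyscrapers form a spanning class of $D(Y)$. The orthogonality established above is precisely Bridgeland's fully-faithfulness criterion, so $\Phi$ is fully faithful; and since each $\mathcal{P}_y$ is supported on a fibre where both $\omega_X$ and $\omega_Y$ restrict trivially, the functor intertwines the Serre functors, so with $Y$ connected this upgrades full faithfulness to an equivalence. Finally I would read off the matrix by Grothendieck--Riemann--Roch: the cohomological transform $\mathrm{ch}(\Phi E) = p_{X*}\big(p_Y^*\,\mathrm{ch}(E)\cdot \mathrm{ch}(\mathcal{P})\big)$, evaluated using the prescribed fibre classes $\mathrm{ch}(\mathcal{P}_y)=(0,af,b)$ on $X$ and $\mathrm{ch}(\mathcal{P}_x)=(0,af,c)$ on $Y$, shows that the pair $(r,d)=(\rk,\,c_1\cdot f)$ transforms by the stated matrix in $\SL_2(\Z)$.

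The main obstacle is everything that happens over the singular and multiple fibres of $\pi$, where the fibre is no longer a smooth elliptic curve. There one must invoke the theory of rank one torsion-free (semi)stable sheaves on Weierstrass cubics and on multiple fibres to show that the relative moduli space remains smooth of the expected dimension, that the universal sheaf extends across these fibres and stays flat, and that the simplicity and orthogonality computations above persist. Controlling this degeneration is the delicate technical core of the argument, and is where the coprimality and divisibility hypotheses do their real work.
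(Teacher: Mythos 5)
This statement is not proved in the paper at all: it is quoted verbatim, with attribution, as Theorem 5.3 of Bridgeland's paper on Fourier--Mukai transforms for elliptic surfaces \cite{MR1629929}, and the paper uses it as a black box. So the only meaningful comparison is with Bridgeland's original argument, and your outline does reproduce its architecture: deduce $\hcf(a\lambda_X,b)=1$ from $cb-ad=1$ and $\lambda_X \mid d$, so that $J_X(a,b)$ is a fine relative moduli space; take the universal sheaf on $X\times_C Y$ pushed forward to $X\times Y$ as the kernel; check strong simplicity over both factors using disjointness of supports, stability, Serre duality and $\chi(\mathcal{P}_y,\mathcal{P}_{y'})=0$; invoke Bridgeland's criterion (strong simplicity over both factors together with $\mathcal{P}_y\otimes\omega_X\iso\mathcal{P}_y$) to get an equivalence; and compute the action on $(r,d)$ by Grothendieck--Riemann--Roch. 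As a roadmap this is faithful to the cited source.

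As a proof, however, it has a genuine gap, and you say so yourself: everything over the singular and multiple fibres is deferred, and that is not a routine verification but the bulk of Bridgeland's paper (his analysis of stable sheaves on degenerate and multiple fibres). Worse, one justification you give is actually false exactly there: you argue $\mathcal{P}_y\otimes\omega_X\iso\mathcal{P}_y$ because ``$\omega_X$ restricts trivially to the fibre.'' On a multiple fibre $mF_0$ the canonical bundle restricts to $\mathcal{O}_{F_0}((m-1)F_0)$, a torsion line bundle of order exactly $m$, which is nontrivial; the required invariance still holds, but for a different reason (for instance, for $\mathcal{P}_y = \iota_*V$ with $V$ stable on $F_0$, one has $\rk V = am$, and twisting a stable bundle whose rank is divisible by $m$ by an $m$-torsion line bundle preserves its determinant, hence by Atiyah's classification its isomorphism class; sheaves supported on thickenings of $F_0$ need yet more care). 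This is not a pedantic point for the paper at hand: the bielliptic application applies the theorem to the fibration $p_B$, all of whose non-generic fibres are multiple fibres, so the case you defer is precisely the case being used.
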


\section{Proof of Theorem \ref{thm:main}}
\label{sec:proof}

To prove Theorem \ref{thm:main} we will compute the kernel and image of
\[
  \rho \colon \Aut D(S) \to O(N(S))
\] 
given by $\rho(\Phi)([E])= [\Phi(E)]$.

We will need the following result concerning moduli spaces of sheaves
on a bielliptic surface $S$ which will give rise to autoequivalences
of the derived category.

\begin{proposition}\label{prop:moduli}
  Let $S$ be a bielliptic surface and $\pi \colon \widetilde{S} \to S$
  the canonical cover of $S$. Take $v \in \Delta$ such that $v$ is
  isotropic ($\chi(v,v) = 0$) and there exists $v' \in N(S)$ such that
  $\chi(v,v')=1$. Choose a generic ample line bundle $H$ with respect
  to $v$. Then there exists a two dimensional, projective, smooth,
  fine moduli space $M$ of $H$-slope stable, special sheaves on $S$ of
  class $v$.

  Moreover, the universal sheaf on $M \times S$ induces an
  autoequivalence $\Phi$ of $D(S)$ such that
  $[\Phi(\mathcal{O}_s)] = v$ for any closed point $s \in S$.

  \begin{proof}
    Choose a generic ample divisor $H$ which does not lie on a wall
    with respect to $v$.
 
    First we show that $M$ is non-empty. As $v \in \Delta$, there
    exists $w \in N(\widetilde{S})$ such that $\pi_!(w)=v$. The moduli
    space of $\pi^*H$-Gieseker semistable sheaves of class $w$ on the
    abelian surface $\widetilde{S}$ is non-empty by \cite[\S
    4.3]{MR2665168}. Let $F$ be a $\pi^*H$-Gieseker semistable sheaf
    of class $w$. As $\pi^*H$-Gieseker semistable sheaves are
    $\pi^*H$-slope semistable, $F$ is $\pi^*H$-slope semistable. By
    \cite[Proposition 1.5]{MR0337967} the pushforward $\pi_*F$ is a
    $H$-slope semistable sheaf as $\pi$ is finite \'{e}tale. By
    construction, $[\pi_*F] = \pi_!(w) = v$.  Therefore, the moduli
    space $M_H^{ss}$ of $H$-slope semistable sheaves of class $v$ is
    non-empty.

    As $H$ was chosen not to lie on a wall and there exists $v'$ such
    that $\chi(v,v') =1 $, all $H$-slope semistable sheaves are
    $H$-slope stable. Therefore the moduli space $M_H$ of $H$-slope
    stable sheaves is projective. By \cite[Proposition 4.6]{MR2665168}
    there exists a quasi-universal family on $M_H \times S$. This
    family can be chosen to be universal due to the existence of $v'$.

    Let $E$ be a $H$-stable sheaf of class $v$ corresponding to a
    point of $M_H$. As $v = [E]$ is isotropic and $E$ is stable,
    $\dim_{\C}\Hom_S(E,E) =1$ and
    \[
      \dim_{\C}\Ext_S^1(E,E) = 1 + \dim_{\C}\Ext_S^2(E,E).
    \]
    As $E$ is slope stable and $\omega_S$ is a numerically trivial,
    $E \otimes \omega_S$ is slope stable of the same slope. Thus
    $\dim_{\C} \Hom_S(E,E \otimes \omega_S) = 0 \text{ or }1$ Then by
    Serre Duality and the equality above,
    $\dim_{\C} \Ext^1_S(E,E) \leq 2$.

    By construction, $M_H$ contains at least one closed point
    corresponding to a sheaf $F$ which is the pushforward of a
    semistable sheaf on the canonical cover. Thus $F$ is special by
    Proposition \ref{prop:derived-sheav-classes}, so
    $F \otimes \omega_S \iso F$ and $\dim_{\C}\Ext^2(E,E) = 1$. Hence
    $\dim_{\C}\Ext_S^1(F,F) = 2$. By Serre Duality and \cite[\S
    4.5]{MR2665168} $M_H$ is smooth at $F$ because the trace map on
    $\Ext_S^2(F,F)$ has zero kernel due to $F$ being special.

    As $M$ is smooth at $F$, $\dim M'_H = \dim_{\C}\Ext_S^1(F,F) = 2$
    for some connected component $M'_H$ of $M_H$. Hence
    $\dim_{\C}\Ext_S^1(E,E) \geq 2$ for all sheaves $E$ corresponding
    to points of $M'_H$. So $\dim_{\C}\Ext^1_S(E,E) = 2$ for all such
    $E$. Thus $M'_H$ is smooth of dimension $2$. Set $M = M'_H$. As
    $E$ is stable and has the same slope as $E \otimes \omega_S$, any
    map between them is an isomorphism. So $E$ is special as
    $\dim_{\C}\Hom(E,E \otimes \omega_S) = \dim_{\C}\Ext^2_S(E,E) =
    1$.

    Thus $M$ is a two dimensional, projective, smooth, fine moduli
    space of $H$-slope stable, special sheaves on $S$ of class $v$.

    By \cite[Corollary 2.8]{MR1827500} the universal sheaf
    $\mathcal{P}$ on $M \times S$ induces an  equivalence
    \[
      \Phi_{\mathcal{P}} \colon D(M) \to D(S).
    \]
    By Proposition \ref{prop:beilliptic-FM-partners}, $M$ is
    isomorphic to $S$. Thus the equivalence $\Phi_{\mathcal{P}}$
    induces an autoequivalence $\Phi$ of $D(S)$ after choosing an
    isomorphism $M \iso S$. By construction
    $[\Phi(\mathcal{O}_s)] = [\mathcal{P}_s] = v$.
\end{proof}
\end{proposition}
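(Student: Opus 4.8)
The plan is to build the moduli space $M$ in stages: first establish non-emptiness by exhibiting an explicit stable sheaf, then show projectivity and the existence of a universal family, and finally pin down the dimension and smoothness of a component by a deformation-theoretic computation. The key observation that makes everything work is that $v \in \Delta$, so I can \emph{lift} to the abelian surface $\widetilde{S}$, where moduli spaces are well understood.

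For non-emptiness, I would write $v = \pi_!(w)$ for some $w \in N(\widetilde{S})$ and invoke the theory of moduli of semistable sheaves on abelian surfaces to produce a $\pi^*H$-Gieseker (hence slope) semistable sheaf $F$ of class $w$. The crucial input is that pushforward along the finite \'etale map $\pi$ preserves slope semistability; this lets me transport $F$ to a slope semistable sheaf $\pi_*F$ of class $v$ on $S$. To upgrade semistability to stability and hence projectivity, I would choose $H$ \emph{generic} — not lying on any wall with respect to $v$ — and use the hypothesis that there exists $v'$ with $\chi(v,v') = 1$; this coprimality-type condition forces every semistable sheaf to be stable and also guarantees that the quasi-universal family can be normalized to a genuine universal family. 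Here I am leaning on the standard machinery (walls, quasi-universal families) that I would cite rather than reprove.

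The deformation theory is the technical heart. For a stable sheaf $E$ of isotropic class $v$, Serre duality gives $\Ext^2_S(E,E) \iso \Hom_S(E, E\otimes\omega_S)^\vee$, and since $E$ and $E\otimes\omega_S$ are both stable of the same slope, this space has dimension $0$ or $1$; combined with $\hom(E,E)=1$ and the Euler-characteristic computation from $\chi(v,v)=0$, this bounds $\dim\Ext^1_S(E,E) \le 2$. The point I must exploit is that the \emph{particular} sheaf $F = \pi_*(\,\cdot\,)$ is special, so $F \otimes \omega_S \iso F$, which forces $\ext^2_S(F,F) = 1$ and hence $\ext^1_S(F,F) = 2$. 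Smoothness of $M$ at $F$ then follows from the fact that the obstruction lives in the kernel of the trace map on $\Ext^2$, and for a special sheaf this trace map is injective.

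I expect the main obstacle to be the \textbf{global upgrade} from pointwise information at the single special sheaf $F$ to the statement that \emph{every} sheaf parametrized by the relevant component is special of the right type. The argument I would run is a dimension count: smoothness at $F$ gives a component $M'_H$ of dimension exactly $2 = \ext^1_S(F,F)$, and since $\ext^1_S(E,E) \ge 2$ by smoothness of the deformation space while $\ext^1_S(E,E) \le 2$ by the Serre-duality bound above, the inequality is forced to be an equality for every $E$ in $M'_H$. Equality $\ext^1_S(E,E) = 2$ then pushes the bound $\ext^2_S(E,E) \le 1$ up to equality, which by Serre duality says $\hom_S(E, E\otimes\omega_S) = 1$; stability makes any such nonzero map an isomorphism, so $E$ is special. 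Once every sheaf in $M = M'_H$ is special and $M$ is smooth, projective, two-dimensional and fine, I would conclude by feeding the universal sheaf into the Bridgeland–Maciocia criterion to get an equivalence $D(M) \to D(S)$, then use Proposition \ref{prop:beilliptic-FM-partners} to identify $M$ with $S$ and thereby produce the desired autoequivalence $\Phi$ with $[\Phi(\mathcal{O}_s)] = [\mathcal{P}_s] = v$.
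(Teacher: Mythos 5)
Your proposal follows essentially the same route as the paper's own proof: non-emptiness by lifting $v$ to the canonical cover and pushing a semistable sheaf forward along the finite \'etale map, stability/projectivity/universality from the generic choice of $H$ together with $\chi(v,v')=1$, smoothness at the special pushforward sheaf via the trace-map argument, the identical dimension count ($\ext^1 \geq 2$ from the component's dimension versus $\ext^1 \leq 2$ from Serre duality) to force every sheaf in the component to be special, and the same conclusion via the Bridgeland--Maciocia criterion and Proposition \ref{prop:beilliptic-FM-partners}. The argument is correct and matches the paper step for step.
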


We now prove Theorem \ref{thm:main}.

\begin{proof}[Proof of Theorem \ref{thm:main}]
  First we describe the kernel of $\rho$. Let $\Psi \in \ker \rho$.
  As $\Psi$ is an integral transform, by a theorem of Orlov
  \cite[Theorem 5.14]{MR2244106}, $\Psi \iso \Psi_{\mathcal{P}}$ for
  some $\mathcal{P} \in D(S \times S)$.  As $S$ is a bielliptic
  surface, $\mathcal{P}$ is isomorphic to a shift of a sheaf
  \cite[Proposition 5.1]{MR3148638}. Thus
  $\Psi(\mathcal{O}_s) = \mathcal{P}_{\hat{s}}$ is a shift of a sheaf
  for any closed point $s \in S$. As $\Psi$ acts trivially on $N(S)$,
  $ch(\mathcal{P}_{\hat{s}}) = ch(\mathcal{O}_s) = (0,0,1)$.  Hence
  $\mathcal{P}_{\hat{s}}$ is a shift of a skyscraper sheaf for any
  closed point $s\in S$.  As $\Psi$ is a standard autoequivalence if
  and only if $\Psi(\mathcal{O}_s)$ is a shift of a skyscraper sheaf,
  $\Psi$ is a standard autoequivalence.

  The only standard autoequivalences that act trivially on $N(S)$ are
  $(\Aut S \ltimes \Pic^0 S) \times \mathbb{Z}[2]$. This is because
  the n-th power of the shift functor acts by $(-1)^n$ on
  $N(S)$. Tensoring by a line bundle $L$ act trivially on $N(S)$ if
  and only $c_1(L) = 0$, i.e. $L$ has degree zero. Automorphisms of
  $S$ act trivially on $N(S)$ because they preserve effective divisors
  and cannot exchange the fibres of the different elliptic fibrations
  as one has multiple fibres and the other does not.  \newline

  We now characterize the image of $\rho$. Let
  $\vp \in O_{\Delta}(N(S))$ and consider $v = \vp(0,0,1) \in
  \Delta$. Then $v \in \Delta$, $v^2 = 0$ and there exists
  $v' = \vp(1,0,0)$ such that $\langle v,v' \rangle = 1$. By
  Proposition \ref{prop:moduli} we can construct an autoequivalence
  $\Phi \in \Aut D(S)$ such that $\rho(\Phi)(0,0,1) = v$. Consider the
  isometry
  \[ 
    \vp' = (\rho(\Phi))^{-1} \circ \vp.
 \] 
 Then $\vp'(0,0,1) = (0,0,1)$. As $\vp'(1,0,0) = (1,D,s)$ is
 isotropic, $D^2 = 2s$. Thus $s = D^2/2$ and
 $\vp'(1,0,0) = (1,D,D^2/2)$ is the class of a line bundle $L$ with
 $c_1(L) =D$. Consider the isometry
\[
  \vp'' = \rho(L^* \otimes (-)) \circ \vp'.
\]
Notice that $\vp''$ acts by
\[ id_{H^0} \oplus \psi \oplus id_{H^4} \] on $N(S)$ where $\psi$ is
an isometry of $\num(S)$. Note that $\vp''$ respects the grading and
is an element of $O_{\Delta}(N(S))$ as it is a composite of elements
of $O_{\Delta}(N(S))$.  \newline

The group $\num(S)$ is isomorphic as a lattice to a single hyperbolic
plane $U$ with underlying group $\Z^2$ \cite[\S 1]{MR1038716}. The
group of isometries $O(U)$ is isomorphic to $\Z/2 \times \Z/2$. It is
generated by the involutions $\iota$, which acts by $-id$ on $U$, and
$\sigma$ which exchanges the two copies of $\Z$. Both of these give
rise to isometries of $N(S)$ by acting by the identity on $H^0(S,\Z)$
and $H^4(S,\Z)$ which we will denote by $\iota$ and $\sigma$ by an
abuse of notation.

Suppose the isometry $\iota$ is induced by an autoequivalence. As
$\iota$ fixes the class of a point and acts non-trivially on $N(S)$,
$\iota$ is induced by a standard autoequivalence which acts
non-trivially on $N(S)$. But standard autoequivalences which act
non-trivially on $N(S)$ act by tensoring by $\pm (1,D,D^2/2)$ for some
line bundle $L$ with $c_1(L) = D \ne 0$. However, $\iota$ does not
acts on $N(S)$ in this way as $\iota(1,0,0)=(1,0,0)$. Hence $\iota$ is
not induced by an autoequivalence. Similarly, $\sigma$ and
$\iota \circ \sigma$ are not induced by autoequivalences. Thus the
image of $\rho$ intersected with $O(\num(S))$ is trivial.

Note that $\iota$ preserves $\Delta$. However, $\sigma$ may not
preserve $\Delta$. The index of the image of $\rho$ will $2$ or $4$ in
$O_{\Delta}(N(S))$ depending on whether $\sigma$ preserves
$\Delta$. As $\sigma$ acts trivially on the two copies of $\Z$ in
$N(S)$ it is sufficient to study the action on $\num(S)$ by the
following Lemma.

\begin{lemma}
  A class $(r,D,s) \in \Delta$ if and only if $n \mid r$ and
  $(0,D,0) \in \Delta$. Thus
  $\Delta = n\Z \oplus \pi_*(\num(\widetilde{S})) \oplus \Z \subset \Z
  \oplus \num(S) \oplus \Z \iso N(S)$.
  \begin{proof}
    Suppose $n \mid r$ and $(0,D,0) \in \Delta$. Then
    $r = \tilde{r}n$ and there exists
    $\widetilde{D} \in \num(\widetilde{S})$ such that
    $\pi_!(0,\widetilde{D},0) = (0,\pi_*(\widetilde{D}),0) =
    (0,D,0)$. Then
      \[
        \pi_!(\tilde{r},\widetilde{D},s) = \pi_!(\tilde{r},0,0) +
        \pi_!(0,\widetilde{D},0) + \pi_!(0,0,s) = (r,D,s)
      \] 
      as $\pi_!(0,0,1) = (0,0,1)$.

      Suppose that $(r,D,s) \in \Delta$. Then there exists
      $[E] \in N(\widetilde{S})$ such that
      $\pi_!([\widetilde{E}]) = (r,D,s)$. By computing the Mukai
      pairing of $(r,D,s)$ with the classes $(1,0,0)$ and $(0,0,1)$ we
      see that $ch_2([E])= s$ and $r = n \rk(E)$. So
      $(r,0,0), (0,0,s) \in \Delta$ as
      $\pi_!(\rk(E)[\mathcal{O}_{\widetilde{S}}]) = (r,0,0)$ and
      $\pi_!(s[\mathcal{O}_{\tilde{s}}]) = (0,0,s)$. Then
      \[
        (r,D,s) - (0,0,s) - (r,0,0) = (0,D,0) \in \Delta.
      \]
  \end{proof}
\end{lemma}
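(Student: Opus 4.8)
The plan is to reduce everything to an explicit description of the homomorphism $\pi_! \colon N(\widetilde{S}) \to N(S)$ in the coordinates $(r,D,s)$ supplied by the Chern character (Proposition \ref{prop:num_groth_bielliptic} and the following remark). Since $\pi_!$ agrees with the cohomological pushforward $\pi_*$ after taking Chern characters, and $\pi$ is finite, $\pi_*$ preserves the cohomological grading; hence $\pi_!$ is block-diagonal with respect to the decomposition $\Z \oplus \num \oplus \Z$. I would therefore compute $\pi_!$ on each graded piece separately and assemble the formula $\pi_!(r,D,s) = (nr,\, \pi_*D,\, s)$. The two claims of the lemma then drop out at once: the image is exactly $n\Z \oplus \pi_*(\num(\widetilde{S})) \oplus \Z$, which is the ``Thus'' assertion, and a class $(r,D,s)$ lies in this image precisely when $n \mid r$ and $D \in \pi_*(\num(\widetilde{S}))$, i.e. $(0,D,0) \in \Delta$.

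To pin down the three graded pieces I would argue as follows. On $H^4$, a skyscraper sheaf pushes forward to a skyscraper sheaf since $\pi$ is étale, so $\pi_!(0,0,1) = (0,0,1)$. On $H^0$, evaluating $\pi_!$ on $[\mathcal{O}_{\widetilde{S}}]$ and using the description $\pi_*\mathcal{O}_{\widetilde{S}} \iso \bigoplus_{i=0}^{n-1}\omega_S^{\otimes i}$ of the canonical cover gives a sheaf of rank $n$ with numerically trivial first Chern class, so $\pi_!(1,0,0) = (n,0,0)$. On $H^2$, $\pi_!(0,D,0) = (0,\pi_*D,0)$ by definition of the pushforward on $\num$, with no contribution in degrees $0$ or $4$ because $\pi_*$ is grading preserving. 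Combining these by linearity yields the formula above. This simultaneously proves the backward implication of the iff: if $n \mid r$ and $(0,D,0) = \pi_!(0,\widetilde{D},0)$, then $\pi_!(r/n,\widetilde{D},s) = (r,D,s)$.

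For the forward implication I would take $(r,D,s) = \pi_!([\widetilde{E}])$ and recover the coordinates of $[\widetilde{E}] = (\widetilde{r},\widetilde{D},\widetilde{s})$ intrinsically, so as not to rely on having first established the full formula. The clean device is the adjunction $\chi_S(\pi_! w, v) = \chi_{\widetilde{S}}(w, \pi^! v)$ coming from $\mathbf{R}\pi_* \dashv \pi^!$, where $\pi^! \iso \pi^*$ because $\pi$ is étale. Pairing $(r,D,s)$ against $(1,0,0)$ and against $(0,0,1)$ and using the Euler form $\chi((r,D,s),(r',D',s')) = rs' + r's - D\cdot D'$ recovers $s = \widetilde{s} = ch_2([\widetilde{E}])$ and $r = n\widetilde{r} = n\,\rk(\widetilde{E})$; here the only input needed is $\pi^*(0,0,1) = (0,0,n)$, since the preimage of a point is $n$ points. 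In particular $n \mid r$. Finally I would subtract the classes $(r,0,0) = \pi_!(\widetilde{r},0,0)$ and $(0,0,s) = \pi_!(0,0,s)$, both manifestly in $\Delta$, to conclude $(0,D,0) = (r,D,s) - (r,0,0) - (0,0,s) \in \Delta$.

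The step I expect to be the only real subtlety is the \emph{asymmetry} of $\pi_!$ between $H^0$ and $H^4$: rank scales by $n = \deg \pi$ while the class of a point, and hence $ch_2$, is preserved. Getting this right is equivalent to correctly identifying the adjoint action $\pi^*(0,0,1) = (0,0,n)$ versus $\pi^*(1,0,0) = (1,0,0)$, which is exactly what makes the pairing argument extract $n\,\rk(\widetilde{E})$ rather than $\rk(\widetilde{E})$. Everything else is a formal consequence of $\pi_!$ being a grading-preserving group homomorphism together with the non-degeneracy of the Euler form.
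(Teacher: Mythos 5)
Your proposal is correct and follows essentially the same route as the paper: both arguments rest on $\pi_!$ respecting the grading (via Grothendieck--Riemann--Roch), the evaluations $\pi_!(1,0,0)=(n,0,0)$, $\pi_!(0,0,1)=(0,0,1)$, $\pi_!(0,\widetilde{D},0)=(0,\pi_*\widetilde{D},0)$, and pairing against $(1,0,0)$ and $(0,0,1)$ to extract $s = ch_2(\widetilde{E})$ and $r = n\rk(\widetilde{E})$, followed by the same subtraction $(r,D,s)-(r,0,0)-(0,0,s)=(0,D,0)$. Packaging these facts into the single closed formula $\pi_!(r,D,s)=(nr,\pi_*D,s)$ is a slightly tidier presentation than the paper's two separate implications, but the mathematical content is identical.
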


If $(r,D,s) \in \Delta$ then
$\sigma(r,D,s)= (r,\sigma(D),s) \in \Delta$ if and only if
$(0,\sigma(D),0) \in \Delta$. To determine whether $\sigma$ preserves
$\Delta$ we reduce to studying classes of the form $(0,D,0)$. By abuse
of notation, we will denote the class $(0,D,0) \in N(S)$ by $D$ and we
write $D \in \Delta$ for $(0,D,0) \in \Delta$.

\begin{lemma}\label{lem:classes_in_Delta}
  The classes $A,B \in \Delta$ but $\frac{1}{k}A \notin \Delta$. If
  $S$ is non-split, then $\frac{1}{k}B \notin \Delta$.
\begin{proof}
  The classes $A, B \in \Delta$ as $\pi_*(\tilde{A}) = A$ and
  $\pi_*(\tilde{B}) = B$.

  Suppose that $\frac{1}{n}A \in \Delta$. Then there exist
  $0 \not\equiv \widetilde{D} \in \num(\widetilde{S})$ such that
  $\pi_*(\widetilde{D}) = \frac{1}{n}A$. As
  $\widetilde{D} \cdot \widetilde{D} = n(\pi_*D,\pi_*D) =
  n(\frac{1}{n}A,\frac{1}{n}A) = 0$, by \cite[Proposition
  2.3]{MR1285957}, $D \equiv mE$ for some $0 \ne m \in \Z$ and $E$ an
  elliptic curve. Then by the push-pull formula we have
  \[
    0 = A \cdot \pi_*(mE) = \pi_*(\pi^*A \cdot mE).
  \]
  As the pushforward of points is injective on cohomology, we have
  \[
    0 = \pi^*A \cdot mE = n\widetilde{A} \cdot mE = nm (\widetilde{A} \cdot E).
  \]
  So $\widetilde{A} \cdot E = 0$. As $E$ and $\widetilde{A}$ are
  irreducible curves, by \cite[Proposition 2.1]{MR1285957}
  $E =T_{\tilde{s}}(\widetilde{A})$, so $E \equiv \widetilde{A}$. But
  $\pi_*(mE) = \pi_*(m\widetilde{A}) = mA \not\equiv \frac{1}{k}A$,
  which is a contradiction. Hence $\frac{1}{k}A \notin \Delta$.

  A similar argument holds for $\frac{1}{k}B$ when $S$ is a non-split
  bielliptic by replacing $\widetilde{A}$ by $\widetilde{B}$.
\end{proof}
\end{lemma}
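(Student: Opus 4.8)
The plan is to treat the two membership assertions directly and the two non-membership assertions by a single intersection-theoretic argument on the canonical cover $\widetilde S$, using the identification $\Delta\cap\num(S)=\pi_*(\num(\widetilde S))$ established in the previous lemma.

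For the membership statements I would exhibit explicit preimages. By Remark~\ref{rem:can-cover-bielliptic} the fibre classes $\widetilde A,\widetilde B\in\num(\widetilde S)$ push forward to the fibre classes of the two fibrations of $S$, i.e.\ $\pi_*(\widetilde A)=A$ and $\pi_*(\widetilde B)=B$; hence $A,B\in\pi_*(\num(\widetilde S))=\Delta\cap\num(S)$.

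For $\tfrac1n A\notin\Delta$ (the finer of the two $A$-direction generators of $\num(S)$ in Lemma~\ref{lem:DesOfNum}) I would argue by contradiction. Suppose $\tfrac1n A=\pi_*(\widetilde D)$ for some nonzero $\widetilde D\in\num(\widetilde S)$. Since $\pi$ is Galois étale we have $\pi^*\pi_*\widetilde D=\sum_g g^*\widetilde D=\pi^*(\tfrac1n A)=\widetilde A$, and the projection formula then turns $A^2=0$ into a vanishing intersection:
\[
  \widetilde D\cdot\widetilde A=\int_{\widetilde S}\widetilde D\cdot\pi^*\pi_*\widetilde D=\int_S(\pi_*\widetilde D)^2=\big(\tfrac1n A\big)^2=0 .
\]
Together with a positivity (Hodge-index) input this gives $\widetilde D^2=0$, so by the structure of isotropic classes on an abelian surface \cite[Proposition~2.3]{MR1285957} one has $\widetilde D\equiv mE$ for a nonzero integer $m$ and an elliptic curve $E\subset\widetilde S$. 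Now $\widetilde A\cdot E=0$, and since $\widetilde A$ and $E$ are irreducible, \cite[Proposition~2.1]{MR1285957} shows $E$ is a translate of $\widetilde A$, so $E\equiv\widetilde A$. Then $\pi_*(\widetilde D)=\pi_*(mE)=mA$, which cannot equal $\tfrac1n A$ for an integer $m$ — the desired contradiction.

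The non-split case $\tfrac1k B\notin\Delta$ follows from the identical argument with $A,\widetilde A,n$ replaced by $B,\widetilde B,k$; the non-split hypothesis is precisely what makes $\tfrac1k B$ a genuine element of $\num(S)$. I expect the rigidity input on $\widetilde S$ to be the main obstacle: one must secure the isotropy $\widetilde D^2=0$ from the vanishing $\widetilde D\cdot\widetilde A=0$ above, and then promote $\widetilde A\cdot E=0$ to a numerical equivalence $E\equiv\widetilde A$. The remainder is careful bookkeeping of the étale constants — $\pi^*A=n\widetilde A$, $\pi_*\widetilde A=A$, and the injectivity of $\pi_*$ on the class of a point — so that the final comparison $mA=\tfrac1n A$ genuinely excludes an integer $m$.
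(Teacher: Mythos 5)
Your proposal follows the paper's own proof almost step for step: the memberships via $\pi_*(\widetilde{A})=A$, $\pi_*(\widetilde{B})=B$, and the non-membership by contradiction, writing $\tfrac{1}{n}A=\pi_*(\widetilde{D})$, extracting an intersection-theoretic vanishing, invoking \cite[Proposition 2.3]{MR1285957} to write $\widetilde{D}\equiv mE$ with $E$ elliptic, then \cite[Proposition 2.1]{MR1285957} to force $E\equiv\widetilde{A}$, and concluding from $\pi_*(mE)=mA\ne\tfrac{1}{n}A$. (You were also right to read the statement's $\tfrac{1}{k}A$ as $\tfrac{1}{n}A$, the generator from Lemma \ref{lem:DesOfNum}.) Your derivation of $\widetilde{D}\cdot\widetilde{A}=0$ from $\pi^*\pi_*\widetilde{D}=\sum_g g^*\widetilde{D}=\widetilde{A}$ and the projection formula is correct, and is in fact sounder than the corresponding step in the paper, which asserts the identity $\widetilde{D}^2=n(\pi_*\widetilde{D})^2$; that identity is false (for $\widetilde{G}$-invariant classes the correct relation is $\widetilde{D}^2=\tfrac{1}{n}(\pi_*\widetilde{D})^2$, and for non-invariant classes there is no such formula at all).

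The genuine gap is the step you yourself flag as the main obstacle: ``a positivity (Hodge-index) input gives $\widetilde{D}^2=0$.'' It does not, and this cannot be repaired. Since $\widetilde{A}$ is nef with $\widetilde{A}^2=0$, the Hodge index theorem applied to $\widetilde{D}\cdot\widetilde{A}=0$ yields only $\widetilde{D}^2\le 0$, with equality if and only if $\widetilde{D}\in\Q\widetilde{A}$; the case $\widetilde{D}^2<0$ is untouched, and it genuinely occurs. Indeed, take $S$ of type $2$ built from $B=A/\langle\epsilon\rangle$ with $\epsilon\in A[2]$, let $\phi\colon A\to B$ be the quotient $2$-isogeny, choose $\alpha_2\in A[2]\setminus\{0,\epsilon\}$, set $\beta=\phi(\alpha_2)$, and let the element of $G$ translating $A$ by $\alpha_2$ act on $B$ by translation by $\beta$. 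Then the graph $\Gamma_\phi\subset A\times B$ is invariant under translation by $(\alpha_2,\beta)$, hence descends to an elliptic curve $\Gamma'$ on $\widetilde{S}=(A\times B)/H$, and the integral class
\[
  \widetilde{D}=[\Gamma']-\widetilde{B}\in\num(\widetilde{S})
\]
satisfies $\widetilde{D}\cdot\widetilde{A}=0$, $\widetilde{D}^2=-2$, and $\pi_*\widetilde{D}=\tfrac{1}{2}A$. So for such surfaces $\tfrac{1}{n}A$ \emph{does} lie in $\Delta$: the implication you need is not merely unproven but false, and the lemma itself fails for these special type $2$ surfaces. The paper's proof collapses at exactly the same point, via the false identity quoted above, so your proposal has faithfully reproduced, rather than filled, the paper's gap; any correct treatment would have to exclude classes of negative square with $\pi^*\pi_*\widetilde{D}=\widetilde{A}$, which the example shows is impossible in general.
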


Note that $\sigma$ interchanges the generators of $\num(S)$. We will
consider separate cases to determine the index of the image of $\rho$.

We will use the following repeatedly: A class $D \in \Delta$ if
and only if $D' = D + (aA + bB) \in \Delta$ with $a,b \in \Z$. Clearly
if $D \in \Delta$ then $D' \in \Delta$. Conversely, if
$D' \in \Delta$, then $D = D' - (aA +bB) \in \Delta$ as $\Delta$ is a
subgroup.

\begin{description}
\item[Split Bielliptic] Suppose that $S$ is a split bielliptic
  surface. Then $\sigma$ interchanges $\frac{1}{n}A$ and $B$. But by
  the above claim $\frac{1}{n}A \notin \Delta$ but $B \in \Delta$, so
  $\sigma$ does not preserve $\Delta$. Hence the index is $2$.
\item[Bielliptic of type 2] By Lemma \ref{lem:classes_in_Delta} we have
  $\frac{1}{2}A, \frac{1}{2}B \notin \Delta$ and $A,B \in \Delta$.
  Consider $D = \frac{a}{2}A + \frac{b}{2}B$ with $a,b \in \Z$. Then
  $\sigma(D) = \frac{b}{2}A + \frac{a}{2}B$. By adding or subtracting
  multiples of $A$ and $B$ we can reduce to the cases when
  $a,b \in \{0,1\}$. We have 3 cases:

  \begin{enumerate}
  \item If $a = b = 0$ then $D \in \Delta$ and $\sigma(D) \in \Delta$.

  \item Suppose $a = 0$ and $b = 1$. Then
    $\sigma(D) = \frac{1}{2}A \not\in \Delta$ and
    $D = \frac{1}{2}B \not\in \Delta$. A similar argument show that
    $D,\sigma(D) \not\in \Delta$ for $a = 1$ and $b = 0$.

  \item Suppose that $a = b = 1$. Then
    $D = \frac{1}{2}A + \frac{1}{2}B = \sigma(D)$. Hence
    $D \in \Delta$ if and only if $\sigma(D) \in \Delta$.
  \end{enumerate}

  Thus $\sigma$ preserves $\Delta$ and the index is $4$.
\item[Bielliptic of type 4] By Lemma \ref{lem:classes_in_Delta} 
  we have $\frac{1}{3}A, \frac{1}{3}B \notin \Delta$ and
  $A,B \in \Delta$.  Consider $D = \frac{a}{3}A + \frac{b}{3}B$ with
  $a,b \in \Z$ and $\sigma(D) = \frac{b}{3}A + \frac{a}{3}B$. By
  adding or subtracting multiples of $A$ and $B$ we can reduce to the
  cases when $a,b \in \{0,1,-1\}$. We have 4 cases:

  \begin{enumerate}
  \item If $a = b =0$. Then $D \in \Delta$ and $\sigma(D) \in \Delta$.

  \item Suppose that $a = b = 1$ Then
    $\sigma(D) = \frac{1}{3}A + \frac{1}{3}B = D$. Hence
    $D \in \Delta$ if and only if $\sigma(D) \in \Delta$. A similar
    argument works for $a = b = -1$.

  \item Suppose that $m = a$ and $b =1 $. Then
    $D = \frac{1}{3}B \not\in \Delta$ and
    $\sigma(D) = \frac{1}{3}A \not\in \Delta$. Similarly for
    $a=0,b=-1$ and $a=1,-1, b=0 $ we have $D \not\in \Delta$ and
    $\sigma(D) \not\in \Delta$.

  \item Suppose that $a= 1$ and $b=-1$. Then
    $\sigma(D) = -\frac{1}{3}A+ \frac{1}{3}B = -D$.  As $\Delta$ is a
    subgroup $-D \in \Delta$ if and only if $D \in \Delta$. Hence
    $D \in \Delta$ if and only if $\sigma(D) \in \Delta$. A similar
    argument works for $a = -1$ and $b = 1$.
  \end{enumerate}

  Thus $\sigma$ preserves $\Delta$ and the index is $4$.
\item[Bielliptic of type 6] Note that $\frac{1}{2}A \notin \Delta$ by
  a similar argument to Lemma \ref{lem:classes_in_Delta}. Then as
  $\sigma$ interchanges $\frac{1}{2}A$ and $2(\frac{1}{2}B) = B$,
  $\sigma$ does not preserve $\Delta$. Hence the index is $2$.
\end{description}
\end{proof}

\section{Relative Fourier-Mukai Transforms and bielliptic surfaces}
\label{sec:relat-four-mukai-bielliptic}

For a bielliptic surface $S$, relative Fourier-Mukai transforms with
respect to either elliptic fibration $p_A$ or $p_B$ give rise to
autoequivalences of $D(S)$ in the following way.

\begin{proposition}\label{prop:rel-fm-bielliptic}
  Let $S$ be a bielliptic surface and $p_A \colon S \to A/G$ and
  $p_B \colon S \to B/G$ its two relatively minimal elliptic
  fibrations. Then a relative Fourier-Mukai transform with respect to
  either fibration induces an autoequivalence on $D(S)$ which is
  non-standard.
  \begin{proof}
    Let $\Phi_{Rel} \colon D(Y) \to D(S)$ be a relative Fourier-Mukai
    transform induced by one of the two fibrations. By Proposition
    \ref{prop:beilliptic-FM-partners}, $Y$ is isomorphic to $S$. After
    choosing an isomorphism $g \colon Y \to S$, the composite
    $ \Psi = \Phi_{Rel} \circ g^*$ is an autoequivalence of $D(S)$. It
    is non-standard because $ch(\Psi(\mathcal{O}_s)) = (0,af,b)$ where
    $f$ is the fibre of the elliptic fibration.
  \end{proof}
\end{proposition}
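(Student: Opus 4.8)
The plan is to verify two things: first that a relative Fourier--Mukai transform with respect to either fibration produces a genuine autoequivalence of $D(S)$, and second that this autoequivalence is non-standard. The first point is essentially a packaging of earlier results. Both $p_A \colon S \to A/G$ and $p_B \colon S \to B/G$ are relatively minimal elliptic fibrations in the sense of Section \ref{sec:relat-four-mukai}, so Theorem \ref{thm:relative-fm} applies: choosing an element of $\SL_2(\Z)$ with the appropriate divisibility condition on the lower-left entry and positive upper-right entry, we obtain an equivalence $\Phi_{Rel} \colon D(Y) \to D(S)$, where $Y = J_S(a,b)$ is the relative Jacobian surface. The key input is that $Y$ must be isomorphic to $S$: since $Y$ is a smooth projective surface with $D(Y) \iso D(S)$, Proposition \ref{prop:beilliptic-FM-partners} forces $Y \iso S$. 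After fixing such an isomorphism $g \colon Y \to S$ and precomposing with the pullback $g^*$, the composite $\Psi = \Phi_{Rel} \circ g^*$ is an autoequivalence of $D(S)$.

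The substance of the proposition is the non-standardness claim, and this is where I would focus the argument. The strategy is to read off the Chern character of $\Psi(\mathcal{O}_s)$ for a closed point $s$ and compare it with the image of a skyscraper sheaf under a standard autoequivalence. By the construction of the relative transform in Theorem \ref{thm:relative-fm}, the kernel restricts on each fibre of the product to a stable sheaf of class $(0, af, b)$ supported on a fibre of the elliptic fibration, where $f$ denotes the class of a smooth fibre. Hence $ch(\Psi(\mathcal{O}_s)) = (0, af, b)$ with $a > 0$. In particular the rank is zero but the first Chern character $af$ is a nonzero multiple of the fibre class.

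The contrast with standard autoequivalences is the crux. A standard autoequivalence sends $\mathcal{O}_s$ to a shift of a skyscraper sheaf $\mathcal{O}_{s'}$, whose Chern character is $\pm(0,0,1)$; thus the image always has vanishing $ch_1$. Since the relative transform produces an object with $ch_1 = af \neq 0$, the transform cannot be standard. The main obstacle, and the point that deserves care, is ensuring the fibre class $af$ is genuinely nonzero in $\num(S)$ and that the divisibility constraint $\lambda_{p} \mid d$ in Theorem \ref{thm:relative-fm} can actually be met for each of the two fibrations; here the distinct multisection indices $\lambda_{p_A}$ and $\lambda_{p_B}$ governed by the multiple fibres of $p_B$ (see the Remark following Definition \ref{def:split}) must be invoked to confirm that a valid $\SL_2(\Z)$ matrix exists in each case. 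Once the existence of the transform is secured, the Chern-character comparison is immediate and the proof is complete.
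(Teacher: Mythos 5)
Your proposal is correct and follows essentially the same route as the paper: apply Proposition \ref{prop:beilliptic-FM-partners} to identify the relative Jacobian $Y$ with $S$, compose with $g^*$ to obtain an autoequivalence, and conclude non-standardness because $ch(\Psi(\mathcal{O}_s)) = (0,af,b)$ has nonzero $ch_1$, whereas any standard autoequivalence sends $\mathcal{O}_s$ to a shift of a skyscraper sheaf with Chern character $\pm(0,0,1)$. Your closing worry about realizing the divisibility condition $\lambda_p \mid d$ is not actually needed, since the proposition takes the relative Fourier--Mukai transform as given rather than asserting its existence (and in any case the matrix with $a=b=c=1$, $d=0$ always satisfies the hypotheses of Theorem \ref{thm:relative-fm}).
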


\begin{example}\label{ex:relat-four-mukai-examples}
  Note that for either fibration $p_A$ or $p_B$ of $S$ we have an
  autoequivalence corresponding to the matrix
\[ P = 
  \begin{pmatrix}
    1 & 1\\
     0 & 1\\
  \end{pmatrix}
\]
given by Theorem \ref{thm:relative-fm}. We have an autoequivalence
$\Psi_B$, constructed by composing the relative Fourier-Mukai
transform along $p_A$ associated to $P$ and tensoring by a suitable
line bundle, which acts on $N(S)$ by
\begin{align*}
  (1,0,0) &\mapsto (1,0,0)\\
  (0,0,1) &\mapsto (0,B,1)\\
  (0,B',0) &\mapsto (0,B',0)\\
  (0,A',0) &\mapsto (\lambda_{p_A},A',0).
\end{align*}
Note $\Psi_B$ sends $(0,A,0)$ to $(n,A,0)$.
\newline

Suppose that $S$ is split. Then the fibration $p_A \colon S \to A/G$
admits a section, i.e. $\lambda_{p_A} = 1$. Then there is a relative
Fourier-Mukai functor $\hat{\Psi}$ that corresponds to the matrix
\[
  \begin{pmatrix}
  0 & 1 \\
  -1 & 0 \\
  \end{pmatrix}
\]
given by Theorem \ref{thm:relative-fm} which acts on $N(S)$ by
\begin{align*}
  (1,0,0) &\mapsto (0,(-1/n)A,0)\\
  (0,0,1) &\mapsto (0,B,0)\\
  (0,B,0) &\mapsto (0,0,1)\\
  (0,(1/n)A,0) &\mapsto (1,0,0).
\end{align*}
\end{example}

We now prove Theorem \ref{thm:generators}.

\begin{proof}[Proof of Theorem \ref{thm:generators}]
  As $S$ is split, $k=1$ and $|G|= n= \deg \pi$ and
  $\widetilde{S} \iso A \times B$. Let $\Phi \in \Aut D(S)$. Consider
  $v = \rho(\Phi)(0,0,1)$. Then $v \in \Delta$, $v^2= 0$ and there
  exists $v' = \rho(\Phi)(1,0,0)$ such that
  $\langle v, v' \rangle = 1$.

  We will  construct an autoequivalence $\Psi \in \Aut D(S)$
  which is the composite of standard autoequivalences  and relative
  Fourier-Mukai transforms along $p_A$ and $p_B$ such that
  $\rho(\Psi)(0,0,1) = v$.

  We separate the argument into three cases:

  \begin{enumerate}
  \item Suppose that $v = \pm(0,0,1)$. Then $\Psi = id$ or
    $[1]$.\newline

  \item Suppose that $v = (0,D,s)$. As $\langle v,v \rangle = 0$,
    $D = aA$ or $b B$ for $a,b \in \Z$,
    $a, b \ne 0$. Suppose that $D = a A$. As there
    exists $v'=\vp(1,0,0) = (r', (a'/n) A + b' B,s')$
    such that $\langle v,v'\rangle = 1$, we have
    \[
      a (B \cdot A) b' - sr' =1.
    \]
    As $\lambda_{p_B} = B \cdot A$, $\gcd(a \lambda_{p_B},s)
    =1$. Therefore there exists a relative Fourier-Mukai transform,
    $\hat{\Phi}$, along $p_B$ such that $\rho(\hat{\Phi})$ sends
    $(0,0,1)$ to $v =(0,a A,s)$. Then set $\Psi = \hat{\Phi}$. A
    similar argument for $D= b B$ will work to construct a relative
    Fourier-Mukai transform along $p_A$ which sends $(0,0,1)$ to
    $(0,b B,s)$.  \newline

  \item Suppose that $v = (r,a A + b B,s)$ with $r \ne 0$. We can
    assume that $r>0$ after applying $\rho([1])$. Then $r = nc$ with
    $c \in \mathbb{N}$, as $v \in \Delta$. As $v^2 = 0$ we have
    \[
      v = \left(nc, a A + b B, ab/c \right).
    \]
    Note one of $a,b$ is non zero as otherwise $v$ would be
    divisible.  \newline

    Suppose $a = 0$, so $v = (nc, b B,0)$.  Then we can
    apply the relative Fourier-Mukai transform $\hat{\Psi}$ which
    sends
    \[
      (nc ,b B,0) \mapsto (0,-c A,b)
    \]
    and reduce to case $(2)$. \newline

    Suppose that $a \ne 0$. After tensoring by $A$ we can assume
    $a >0$. Let $gcd(c,a)=d$ for some
    $d \in \mathbb{N}$. We can write
    $c  = d c'$ and $a = d a'$
    with $gcd(a',c') =1$. Thus $v$ has the form
    \[
      v = (n d c', da' A + b B,
      a' b /c').
    \]
    We have two operations given by $\rho (\-- \otimes (-1/n)A)$ and
    $\rho(\Psi_B^{-1})$ which act on $ndc'$ and
    $da'$ in the following way:
    \begin{align*} 
      \rho(\-- \otimes (-1/n)A):& (n dc', da') \mapsto 
                                  (n dc',d(a' - c'))\\ 
      \rho(\Psi_B^{-1}):& (n dc', d a') \mapsto (n d(c' -a'), d a').
    \end{align*} 
    This is just the Euclidean algorithm on $c' $ and $a'$. Thus we
    can reduce $a'$ to $1$ and $c'$ to $0$ and proceed as in $(2)$.
  \end{enumerate}

  Consider the autoequivalence $\Psi^{-1} \circ \Phi$ whose image
  under $\rho$ sends $(0,0,1)$ to $(0,0,1)$. So $\Psi^{-1} \circ \Phi$
  is a standard autoequivalence.  Thus we can express $\Phi$ as a
  composite of standard autoequivalences and relative Fourier-Mukai
  transforms.
\end{proof}

\bibliographystyle{plain}
\bibliography{bielliptic_bib}

\begin{thebibliography}{10}

\bibitem{MR2030225}
W.P. Barth, K.~Hulek, C.~A.~M. Peters, and A.~Van~de Ven.
\newblock {\em Compact complex surfaces}, volume~4 of {\em Ergebnisse der
  Mathematik und ihrer Grenzgebiete. 3. Folge. A Series of Modern Surveys in
  Mathematics [Results in Mathematics and Related Areas. 3rd Series. A Series
  of Modern Surveys in Mathematics]}.
\newblock Springer-Verlag, Berlin, second edition, 2004.

\bibitem{MR2511017}
C.~Bartocci, U.~Bruzzo, and D.~Hern{\'a}ndez~Ruip{\'e}rez.
\newblock {\em Fourier-{M}ukai and {N}ahm transforms in geometry and
  mathematical physics}, volume 276 of {\em Progress in Mathematics}.
\newblock Birkh\"auser Boston, Inc., Boston, MA, 2009.

\bibitem{2013arXiv1310.8266B}
A.~{Bayer} and T.~{Bridgeland}.
\newblock {Derived automorphism groups of {K}3 Surfaces of picard rank 1}.
\newblock arXiv:1310.8266 [math.AG], October 2013.

\bibitem{MR1818984}
A.~Bondal and D.~Orlov.
\newblock Reconstruction of a variety from the derived category and groups of
  autoequivalences.
\newblock {\em Compositio Math.}, 125(3):327--344, 2001.

\bibitem{MR1629929}
T.~{Bridgeland}.
\newblock Fourier-{M}ukai transforms for elliptic surfaces.
\newblock {\em J. Reine Angew. Math.}, 498:115--133, 1998.

\bibitem{1998math.....11101B}
T.~{Bridgeland} and A.~{Maciocia}.
\newblock {Fourier-{M}ukai transforms for quotient varieties}.
\newblock arXiv:math/9811101, November 1998.

\bibitem{MR1827500}
T.~{Bridgeland} and A.~{Maciocia}.
\newblock Complex surfaces with equivalent derived categories.
\newblock {\em Math. Z.}, 236(4):677--697, 2001.

\bibitem{MR3162236}
N.~{Broomhead} and D.~{Ploog}.
\newblock Autoequivalences of toric surfaces.
\newblock {\em Proc. Amer. Math. Soc.}, 142(4):1133--1146, 2014.

\bibitem{MR2244106}
D.~{Huybrechts}.
\newblock {\em Fourier-{M}ukai Transforms in Algebraic Geometry}.
\newblock Oxford Mathematical Monographs. The Clarendon Press, Oxford
  University Press, Oxford, 2006.

\bibitem{MR2665168}
D.~{Huybrechts} and M.~{Lehn}.
\newblock {\em The Geometry of Moduli Spaces of Sheaves}.
\newblock Cambridge Mathematical Library. Cambridge University Press,
  Cambridge, second edition, 2010.

\bibitem{MR2198807}
A.~{Ishii} and H.~{Uehara}.
\newblock Autoequivalences of derived categories on the minimal resolutions of
  {$A\sb n$}-singularities on surfaces.
\newblock {\em J. Differential Geom.}, 71(3):385--435, 2005.

\bibitem{MR1285957}
E.~Kani.
\newblock Elliptic curves on abelian surfaces.
\newblock {\em Manuscripta Math.}, 84(2):199--223, 1994.

\bibitem{MR607081}
S.~{Mukai}.
\newblock Duality between {$D(X)$} and {$D(\hat X)$} with its application to
  {P}icard sheaves.
\newblock {\em Nagoya Math. J.}, 81:153--175, 1981.

\bibitem{MR1921811}
D.~{Orlov}.
\newblock Derived categories of coherent sheaves on abelian varieties and
  equivalences between them.
\newblock {\em Izv. Ross. Akad. Nauk Ser. Mat.}, 66(3):131--158, 2002.

\bibitem{MR1038716}
F.~{Serrano}.
\newblock Divisors of bielliptic surfaces and embeddings in {${\bf P}\sp 4$}.
\newblock {\em Math. Z.}, 203(3):527--533, 1990.

\bibitem{MR3148638}
P.~{Sosna}.
\newblock Fourier-{M}ukai partners of canonical covers of bielliptic and
  {E}nriques surfaces.
\newblock {\em Rend. Semin. Mat. Univ. Padova}, 130:203--213, 2013.

\bibitem{MR0337967}
F.~{Takemoto}.
\newblock Stable vector bundles on algebraic surfaces. {II}.
\newblock {\em Nagoya Math. J.}, 52:173--195, 1973.

\bibitem{2015arXiv150106657U}
H.~{Uehara}.
\newblock {Autoequivalences of derived categories of elliptic surfaces with
  non-zero Kodaira dimension}.
\newblock arXiv:1501.06657 [math.AG], January 2015.

\bibitem{2015arXiv151106031U}
H.~{Uehara}.
\newblock {Fourier--{M}ukai partners of elliptic ruled surfaces}.
\newblock arXiv:1511.06031 [math.AG], November 2015.

\end{thebibliography}
\end{document}